\DeclareMathOperator*{\argmin}{argmin}
\newcommand{\Id}{\ensuremath{\operatorname{Id}}}
\newcommand{\ds}{\displaystyle}
\newcommand{\nexto}{\kern -0.54em}
\newcommand{\dR}{{\rm {I\ \nexto R}}}
\newcommand{\dN}{{\rm {I\ \nexto N}}}
\newcommand{\dZ}{{\cal Z \kern -0.7em Z}}
\newcommand{\dC}{{\rm\hbox{C \kern-0.8em\raise0.2ex\hbox{\vrule
height5.4pt width0.7pt}}}}
\newcommand{\dQ}{{\rm\hbox{Q \kern-0.85em\raise0.25ex\hbox{\vrule
height5.4pt width0.7pt}}}}
\newcommand{\proofbox}{\hspace{\fill}{$\Box$}}
\newtheorem{proposition}{Proposition}
\newenvironment{proof}{Proof.}{\proofbox}
\begin{document}

\author{Authors}

\author{
Heinz H. Bauschke\footnote{ Department of Mathematics, University of British Columbia, Kelowna, B.C. V1V 1V7, Canada. E-mail: heinz.bauschke@ubc.ca\,.}
\and
Regina S. Burachik\footnote{School of Information Technology and Mathematical Sciences, University of South Australia, Mawson Lakes, S.A. 5095, Australia. E-mail: regina.burachik@unisa.edu.au\,.}
\and
C. Yal{\c c}{\i}n Kaya\footnote{School of Information Technology and Mathematical Sciences, University of South Australia, Mawson Lakes, S.A. 5095, Australia. E-mail: yalcin.kaya@unisa.edu.au\,.}
}

\title{\vspace{-10mm}\bf Constraint Splitting and Projection Methods for Optimal Control of Double Integrator}

\maketitle

\vspace*{-7mm}
\begin{center}
Dedicated to the memory of Jonathan M. Borwein
\end{center}

\begin{abstract} {\noindent\sf  We consider the minimum-energy control of a car, which is modelled as a point mass sliding on the ground in a fixed direction, and so it can be mathematically described as the double integrator.  The control variable, representing the acceleration or the deceleration, is constrained by simple bounds from above and below.  Despite the simplicity of the problem, it is not possible to find an analytical solution to it because of the constrained control variable.  To find a numerical solution to this problem we apply three different projection-type methods: (i)~Dykstra's algorithm, (ii)~the Douglas--Rachford (DR) method and (iii)~the Arag\'on Artacho--Campoy (AAC) algorithm. To the knowledge of the authors, these kinds of (projection) methods have not previously been applied to continuous-time optimal control problems, which are infinite-dimensional optimization problems.  The problem we study in this article is posed in infinite-dimensional Hilbert spaces.  Behaviour of the DR and AAC algorithms are explored via numerical experiments with respect to their parameters.  An error analysis is also carried out numerically for a particular instance of the problem for each of the algorithms.
}
\end{abstract}
\begin{verse}
{\em Key words}\/: {\sf Optimal control, Dykstra projection method, Douglas-Rachford  method, Arag\'on Artacho--Campoy algorithm, Linear quadratic optimal control, control constraints, Numerical methods.}
\end{verse}

\pagestyle{myheadings}
\markboth{}{\sf\scriptsize Projection Methods for Optimal Control of Double Integrator\ \ by H. H. Bauschke, R. S. Burachik \& C. Y. Kaya}

\section{Introduction}

In this paper, we provide (to the best of our knowledge also first) application of various best approximation algorithms to solve a continuous-time optimal control problem.  Operator splitting methods were applied previously to discrete-time optimal control problems~\cite{EckFer1998, OdoStaBoy2013}, which are finite-dimensional problems.  In~\cite{OdoStaBoy2013}, for example, the state difference equations comprise the constraint ${\cal A}$, and the box constraints on the state and control variables comprise ${\cal B}$.  The condition of belonging to the sets ${\cal A}$ and ${\cal B}$ are then appended to the objective function via indicator functions.  The original objective function that is considered in~\cite{OdoStaBoy2013} is quadratic in the state and control variables.  In the next step in~\cite{OdoStaBoy2013}, the new objective function is split into its quadratic and convex parts and the Douglas-Rachford splitting method is applied to solve the problem.

In the current paper, we deal with continuous-time optimal control problems, which are infinite-dimensional optimization problems that are set in Hilbert spaces.  After splitting the constraints of the problem, we apply Dykstra's algorithm~\cite{BoyleDykstra}, the Douglas--Rachford (DR) method \cite{DougRach, LM, EckBer, BauCombettes, Svaiter, BauMoursi}, and the Arag\'on Artacho--Campoy (AAC) algorithm~\cite{AAC}, all of which solve the underlying best approximation problem. 

The exposure of the current paper is more in the style of a tutorial.  We pose the problem of minimum-energy control of a simplified model of a car, amounting to the double integrator, where the control variable has simple lower and upper bounds and the initial and terminal state variables are specified.  We split the constraints into two, ${\cal A}$ and ${\cal B}$, representing respectively the state differential equations (the double integrator) along with their boundary conditions and the constraints on the control variable.  We define two subproblems, one subject to ${\cal A}$, and the other one subject to ${\cal B}$.  We take advantage of the relatively simple form of the optimal control problem and derive analytical expressions for the optimality conditions and implement these in defining the projections onto ${\cal A}$ and ${\cal B}$.

The solutions of these subproblems provide the projections of a given point in the control variable space onto the constraint sets ${\cal A}$ and ${\cal B}$, respectively, in some optimal way.  By performing these projections in the way prescribed by the above-listed algorithms, we can ensure convergence to a solution of the original optimal control problem, 

Note that while the minimum-energy control of the double integrator without any constraints on the control variable can be solved analytically, the same problem with (even simple bound, i.e., box) constraints on the control variable can in general be solved only numerically.  This problem should be considered within the framework of control-constrained linear-quadratic optimal control problems for which new numerical methods are constantly being developed---see for example \cite{BurKayMaj2014, AltKaySch2016} and the references therein.

The current paper is a prototype for future applications of projection methods to solving more general optimal control problems.  Indeed, the minimum-energy control of double integrator is a special case of linear quadratic optimal control problems; so, with the reporting of the current study, an extension to more general problems will be imminent.  

The paper is organized as follows.  In Section 2, we state the control-constrained minimum-energy  problem  for the double integrator, and write down the optimality conditions.  We provide the analytical solution for the unconstrained problem.  For the control-constrained case, we briefly describe the standard numerical approach and consider an instance of the problem which we use in the numerical experiments in the rest of the paper.  We define the constraint sets ${\cal A}$ and ${\cal B}$.  In Section~3, we provide the expressions for the projections onto ${\cal A}$ and ${\cal B}$.  We describe the algorithms in Section~4 and in the beginning of Section~5.  In the remaining part of Section~5, we present numerical experiments to study parametric behaviour of the algorithms as well as the errors in the state and control variables with each algorithm.  In Section~6, we provide concluding remarks and list some open problems.

\section{Minimum-Energy Control of Double Integrator}

We consider the minimum-energy control of a car, with a constrained control variable.  Consider the car as a point unit mass, moving on a frictionless ground in a fixed line of action.  Let the position of the car at time $t$ be given by $y(t)$ and the velocity by $\dot{y}(t):=(dy/dt)(t)$.  By Newton's second law of motion, $\ddot{y}(t) = u(t)$, where $u(t)$ is the summation of all the external forces applied on the car, in this case the force simply representing the acceleration and deceleration of the car.  This differential equation model is referred to as the {\em double integrator} in system theory literature, since $y(t)$ can be obtained by integrating $u(t)$ twice.

\noindent 
{\bf Optimal Control Problem.}\ \ Suppose that the total force on the car, i.e., the acceleration or deceleration of the car, is constrained by a magnitude of $a>0$.  Let $x_1 := y$ and $x_2 := \dot{y}$.  Then the problem of minimizing the energy of the car, which starts at a position $x_1(0) = s_0$ with a velocity $x_2(0) = v_0$ and finishes at some other position $x_1(1) = s_f$ with velocity $x_2(1) = v_f$, within one unit of time, can be posed as follows.
\[
\mbox{(P) }\left\{\begin{array}{rl}
\ds\min & \ \ \ds\frac{1}{2}\int_0^1 u^2(t)\,dt  \\[5mm] 
\mbox{subject to} & \ \ \dot{x}_1(t) = x_2(t)\,,\ \ x_1(0) = s_0\,,\ \ x_1(1) = s_f\,, \\[2mm]
& \ \ \dot{x}_2(t) = u(t)\,,\ \ \ \,x_2(0) = v_0\,,\ \ x_2(1) = v_f\,, \ \ \ |u(t)|\le a\,.
\end{array} \right.
\]
Here, the functions $x_1$ and $x_2$ are referred to as the {\em state variables} and $u$ the {\em control variable}. As a first step in writing the conditions of optimality for this optimization problem, define the Hamiltonian function $H$ for Problem (P) simply as
\begin{equation}  \label{Hamiltonian}
H(x_1,x_2,u,\lambda_1,\lambda_2) := \frac{1}{2}\,u^2 + \lambda_1\,x_2 + \lambda_2\,u\,,
\end{equation}
where $\lambda(t) := (\lambda_1(t),\lambda_2(t))\in\dR^2$ is the {\em adjoint variable} (or {\em costate}) {\em vector} such that (see~\cite{Hestenes66})
\begin{equation} \label{adjoint} 
\dot{\lambda}_1 = -\partial H /\partial x_1\quad\mbox{and}\quad
\dot{\lambda}_2 = -\partial H /\partial x_2\,.
\end{equation} 
Equations in~\eqref{adjoint} simply reduce to 
\begin{equation} \label{adjoint_sol} 
\lambda_1(t) = c_1\quad\mbox{and}\quad \lambda_2(t) = -c_1\,t - c_2\,, 
\end{equation} 
where $c_1$ and $c_2$ are real constants.  Let the state variable vector $x(t) := (x_1(t),x_2(t))\in\dR^2$.

\noindent 
{\bf Maximum Principle.}\ \ If $u$ is an optimal control for Problem~(P), then there exists a continuously differentiable vector of adjoint variables $\lambda$, as defined in \eqref{adjoint}, such that $\lambda(t) \neq 0$ for all $t\in[0,t_f]$, and
that, for a.e. $t\in[0,t_f]$,
\begin{equation}  \label{optcont1}
u(t) = \argmin_{v\in[-a,a]} H(x, v, \lambda(t))\,,
\end{equation}
i.e.,
\begin{equation}  \label{optcont2}
u(t) = \argmin_{v\in[-a,a]}\ \frac{1}{2}\,v^2 + \lambda_2(t)\,v\,;
\end{equation}
see e.g. \cite{Hestenes66}.  Condition~\eqref{optcont2} implies that the optimal control is given by
\begin{equation}  \label{optcont3}
u(t) = \left\{\begin{array}{rl}
-\lambda_2(t)\,, &\ \ \mbox{if\ \ } -a\le\lambda_2(t)\le a\,, \\[1mm]
a\,, &\ \ \mbox{if\ \ } \lambda_2(t)\le -a\,, \\[1mm]
-a\,, &\ \ \mbox{if\ \ } \lambda_2(t)\ge a\,.
\end{array} \right.
\end{equation}
From \eqref{optcont3}, we can also conclude that the optimal control $u$ for Problem~(P) is continuous.

When $a$ is large enough, the control constraint does not become active, so the optimal control is simply $-\lambda_2$, and it is a straightforward classroom exercise to find the analytical solution as 
\begin{eqnarray*}
u(t) &=& c_1\,t + c_2\,, \\[1mm]
x_1(t) &=& \frac{1}{6}\,c_1\,t^3 + \frac{1}{2}\,c_2\,t^2 + v_0\,t + s_0\,, \\[1mm]
x_2(t) &=& \frac{1}{2}\,c_1\,t^2 + c_2\,t + v_0\,,
\end{eqnarray*} 
for all $t\in[0,1]$, where
\begin{eqnarray*}
c_1 &=& -12\,(s_f - s_0) + 6\,(v_0 + v_f)\,, \\[1mm]
c_2 &=& 6\,(s_f - s_0) - 2\,(2\,v_0 + v_f)\,.
\end{eqnarray*}
The solution of an instance of Problem~(P), with $s_0 = 0$, $s_f = 0$, $v_0 = 1$, $v_f = 0$, and large $a$, say $a=9$, is depicted in Figure~\ref{fig:unconstr_soln}.  Note that, for all $t\in[0,1]$, $\lambda_2(t) = -u(t) = -6\,t + 4$ and $\lambda_1(t) = c_1 = 6$.  The graphs of $\lambda_1$ and $\lambda_2$ are not displayed for this particular instance.

\begin{figure}[t]
\begin{minipage}{80mm}
\begin{center}
\includegraphics[width=80mm]{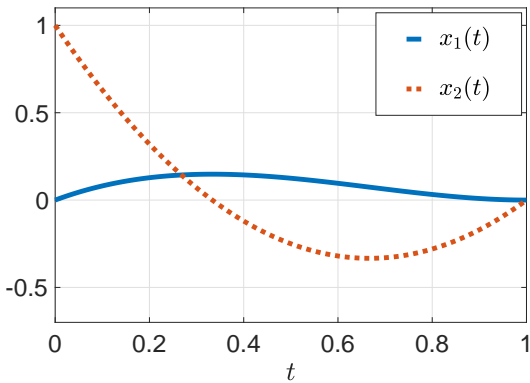} \\[3mm]
(a) Optimal state variables. \\
\end{center}
\end{minipage}
\begin{minipage}{80mm}
\begin{center}
\includegraphics[width=80mm]{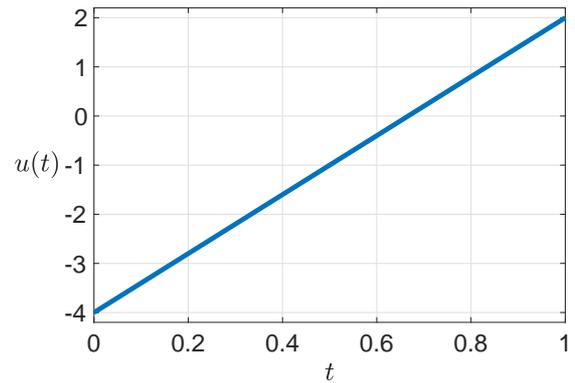} \\[3mm]
(b) Optimal control variable.
\end{center}
\end{minipage}
\caption{\sf Solution of Problem~(P) with large $a$ (so that $u(t)$ is unconstrained), $s_0 = 0$, $s_f = 0$, $v_0 = 1$, $v_f = 0$.} 
\label{fig:unconstr_soln}
\end{figure}

When $a$ is not so large, say $a=2.5$, as we will consider next so that the control constraint becomes active, it is usually not possible to find an analytical solution, i.e., a solution has to be found numerically, as described below.
\begin{figure}[t]
\begin{minipage}{80mm}
\begin{center}
\includegraphics[width=80mm]{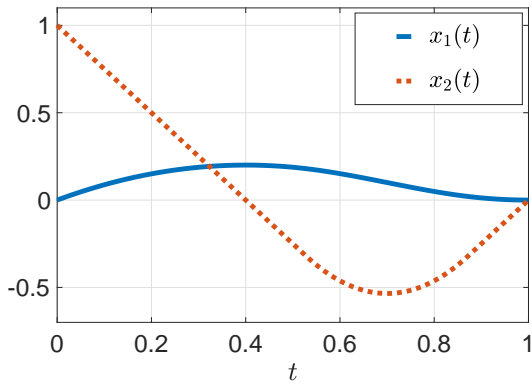} \\[3mm]
(a) Optimal state variables. \\
\end{center}
\end{minipage}
\begin{minipage}{80mm}
\begin{center}
\includegraphics[width=80mm]{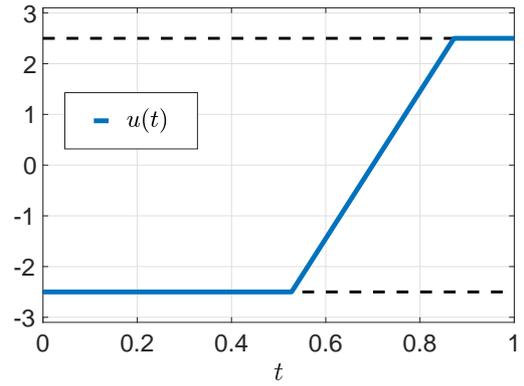} \\[3mm]
(b) Optimal control variable.
\end{center}
\end{minipage}
\\[5mm]
\begin{minipage}{80mm}
\
\end{minipage}
\begin{minipage}{80mm}
\begin{center}
\includegraphics[width=80mm]{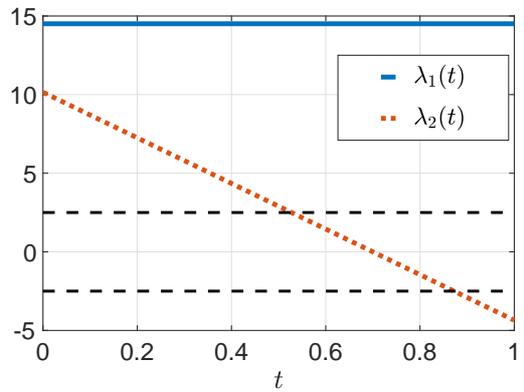} \\[3mm]
(c) Adjoint variables.
\end{center}
\end{minipage}
\caption{\sf Solution of direct discretization of Problem~(P), with $a=2.5$, $s_0 = 0$, $s_f = 0$, $v_0 = 1$, $v_f = 0$.} 
\label{fig:discrete_soln}
\end{figure}

\noindent 
{\bf Numerical Approach.}\ \ A straightforward and popular numerical approach to solving Problem~(P) is to discretize Problem~(P) over a partition of the time horizon $[0,1]$ and then use some finite-dimensional optimization software to get a {\em discrete} (finite-dimensional) {\em solution} for the state and control variables $x(t)$ and $u(t)$.  The discrete solution is an approximation of the continuous-time solution.  This approach is often referred to as the {\em direct method} or the ({\em first-}){\em discretize-then-optimize} approach.  A survey and discussion of Euler discretization of linear-quadratic optimal control problems and convergence of their discretized solutions to their continuous-time solutions can be found in~\cite[Section~5]{BurKayMaj2014}.  

Figure~\ref{fig:discrete_soln} depicts the discrete solution of Problem~(P) with the instance where $a=2.5$, $s_0 = 0$, $s_f = 0$, $v_0 = 1$, $v_f = 0$.   The solution was obtained by pairing up the optimization modelling language AMPL~\cite{AMPL} and the finite-dimensional optimization software Ipopt\cite{WacBie2006}.  The number of discretization nodes was taken to be 2000.  The multipliers of the (Euler approximation of the) state differential equation constraints are provided by Ipopt when it finds an optimal solution to the discretized (finite-dimensional) problem.  These multipliers have been plotted in Figure~\ref{fig:discrete_soln}(c).  It should be noted that the graph of the adjoint variable $\lambda_2(t)$ given in Figure~\ref{fig:discrete_soln}(c) verifies the graph of the optimal control $u(t)$ in Figure~\ref{fig:discrete_soln}(b) via the optimal control rule in~\eqref{optcont3}.  In Figures~\ref{fig:discrete_soln}(b) and (c), the bounds $\pm\,2.5$ have been marked by horizontal dashed lines for ease of viewing.

\noindent 
{\bf Function Spaces.}\ \ For the numerical methods, we consider projection/reflection methods in Hilbert spaces.  The spaces associated with Problem~(P) are set up as follows.  Let $q\in \dN$ and $L^2(0,1;\dR^q)$ be the Banach space of Lebesgue measurable functions 
\[\begin{array}{rccl}
z:&[0,1]&\to&\dR^q\\
&t&\mapsto&(z_1(t),\ldots,z_q(t))^t,\\
\end{array}\]
with finite $L^2$ norm. Namely, define
\[
\|z\|_2 := \left(\sum_{i=1}^q \|z_i\|_2^2\right)^{1/2}
\]
where
\[
\|z_i\|_2 := \left(\int_{0}^{1} \mid\!\! z_i(t)\!\!\mid^2\, \mathrm{d}t\right)^{1/2}\,,
\]
for $i = 1,\ldots,q$, with $\left|\,\cdot\,\right|$ the modulus or absolute value.  In other words,
\[
L^2(0,1;\dR^q) := \left\{z:[0,1]\to \dR^q\::\: \|z\|_2 < \infty\right\}\,.
\]
Furthermore, $W^{1,2}(0,1;\dR^q)$ is the Sobolev space of absolutely continuous functions, namely
\[
W^{1,2}(0,1;\dR^q) = \{z\in L^2(0,1;\dR^q)\left.\right| \dot{z}=dz/dt \in L^2(0,1;\dR^q)\}\,,
\]
endowed with the norm 
\[
\|z\|_{W^{1,2}} := \left(\sum_{i=1}^q \left[\|z_i\|^2_2+ \|\dot{z_i}\|^2_2 \right]\right)^{1/2}\,.
\]
In Problem~(P), the state variable $x\in W^{1,2}(0,1;\dR^2)$ and the control variable $u\in L^2(0,1;\dR)$.

\noindent 
{\bf Constraint Splitting.}\ \ Next, we split the constraints of Problem~(P) into two subsets, ${\cal A}$ and ${\cal B}$. The subset ${\cal A}$ collects together all the feasible control functions satisfying only the dynamics of the car.  The subset ${\cal B}$, on the other hand, collects all the control functions whose values are constrained by $-a$ and $a$. 
\begin{eqnarray} 
{\cal A} := \big\{u\in L^2(0,1;\dR)\ |&& \exists x\in W^{1,2}(0,1;\dR^2)\mbox{ which solves } \nonumber \\
&&\dot{x}_1(t) = x_2(t)\,,\ x_1(0) = s_0\,,\ x_1(1) = s_f\,, \nonumber \\[1mm]
&&\dot{x}_2(t) = u(t)\,,\ \ \,x_2(0) = v_0\,,\ x_2(1) = v_f\,, \ \forall t\in[0,1]\big\}\,, \label{A} \\[2mm]
{\cal B} := \big\{u\in L^2(0,1;\dR)\ |&& -a\le u(t)\le a\,,\ \mbox{for all } t\in[0,1]\big\}\,. \label{B}
\end{eqnarray}
The rationale behind this sort of splitting is as follows:  The problem of minimizing the energy of the car subject to only ${\cal A}$ or only ${\cal B}$ is much easier to solve -- in fact, the solutions can be analytically written in each case.  If, for some given $u$, a solution exists to the two-point boundary-value (TPBVP) in \eqref{A} then that solution is unique by the linearity of the TPBVP~\cite{AscMatRus1995,StoBul2002}.  Note that a control solution $u$ as in \eqref{A} exists by the (Kalman) controllability of the double integrator -- see \cite{Rugh1995}.  So the set ${\cal A}$ is nonempty.  Note that the constraint set~${\cal A}$ is an {\em affine subspace} and ${\cal B}$ a {\em box}.

\section{Projections}

All of the projection methods that we will consider involve projections onto the sets ${\cal A}$ and ${\cal B}$.  The projection onto ${\cal A}$ from a current iterate $u^-$ is the point $u$ solving the following problem. \\
\[
\mbox{(P1) }\left\{\begin{array}{rl}
\ds\min & \ \ \ds\frac{1}{2}\int_0^1 (u(t) - u^-(t))^2\,dt  \\[5mm] 
\mbox{subject to} & \ \ u\in{\cal A}\,.
\end{array} \right.
\]
In (P1), we minimize the squared $L^2$-norm distance between $u^-$ and $u$.  The projection onto ${\cal B}$ from a current iterate $u^-$ is similarly the point $u$ solving the following problem.
\[
\mbox{(P2) }\left\{\begin{array}{rl}
\ds\min & \ \ \ds\frac{1}{2}\int_0^1 (u(t) - u^-(t))^2\,dt  \\[5mm] 
\mbox{subject to} & \ \ u\in{\cal B}\,.
\end{array} \right.
\]

\begin{proposition}[Projection onto \boldmath{${\cal A}$}]  \label{proj_A}
The projection $P_{{\cal A}}$ of $u^-\in L^2(0,1;\dR)$ onto the constraint set ${\cal A}$, as the solution of Problem~{\em(P1)}, is given by
\begin{equation}  \label{u_proj_A}
P_{{\cal A}}(u^-)(t) = u^-(t) + c_1\,t + c_2\,,
\end{equation}
for all $t\in[0,1]$, where
\begin{eqnarray}
&& c_1 = 12\,(x_1(1) - s_f) - 6\,(x_2(1) - v_f)\, \label{c1_sol}\,, \\[1mm]
&& c_2 = -6\,(x_1(1) - s_f) + 2\,(x_2(1) - v_f)\, \label{c2_sol}\,,
\end{eqnarray}
and $x_1(1)$ and $x_2(1)$ are obtained by solving the initial value problem
\begin{eqnarray}  \label{IVP}
&& \dot{x}_1(t) = x_2(t)\,,\ \ \ x_1(0) = s_0\,, \label{IVPa_ref} \\[1mm]
&& \dot{x}_2(t) = u^-(t)\,,\ \ \ x_2(0) = v_0\,, \label{IVPb_ref}
\end{eqnarray}
for all $t\in[0,1]$.
\end{proposition}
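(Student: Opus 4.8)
The plan is to treat (P1) as an optimal control problem whose control is \emph{unconstrained}, extract the optimal control in closed form from the first-order conditions, and then pin down the two remaining free constants from the terminal boundary conditions. First I would record that ${\cal A}$ is a nonempty closed affine subspace of $L^2(0,1;\dR)$ (nonemptiness and the affine structure were already noted above), while $u\mapsto\tfrac12\|u-u^-\|_2^2$ is strictly convex and coercive; hence (P1) has a unique minimizer, so $P_{{\cal A}}(u^-)$ is well defined, and it suffices to exhibit a feasible $u$ satisfying the first-order optimality conditions, which for this convex problem are also sufficient.

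Next I would form the Hamiltonian for (P1), namely $\widehat H(x,u,\lambda)=\tfrac12(u-u^-)^2+\lambda_1 x_2+\lambda_2 u$, and write its adjoint system $\dot\lambda_1=-\partial\widehat H/\partial x_1=0$ and $\dot\lambda_2=-\partial\widehat H/\partial x_2=-\lambda_1$, which integrates exactly as in \eqref{adjoint_sol} to $\lambda_1(t)=c_1$, $\lambda_2(t)=-c_1 t-c_2$ for real constants $c_1,c_2$. Since the control in (P1) is unconstrained, the optimality condition \eqref{optcont1} reduces to stationarity $\partial\widehat H/\partial u=(u-u^-)+\lambda_2=0$, i.e.\ $u(t)=u^-(t)-\lambda_2(t)=u^-(t)+c_1 t+c_2$, which is \eqref{u_proj_A}. (Equivalently, one can bypass the Hamiltonian formalism: the projection onto an affine subspace is characterized by feasibility together with $u^--u$ being orthogonal to the direction space of ${\cal A}$; an integration by parts shows that every affine function $c_1 t+c_2$ is orthogonal to that direction space, since a control perturbation keeping us in ${\cal A}$ drives a trajectory $z$ with $z_2(0)=z_2(1)=0$ and $\int_0^1 z_2=z_1(1)-z_1(0)=0$, so the only thing left to arrange is feasibility.)

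Finally I would determine $c_1$ and $c_2$ by imposing $u\in{\cal A}$. By linearity of the dynamics, the state generated by $u=u^-+c_1 t+c_2$ is the superposition of the solution of the initial-value problem \eqref{IVP} driven by $u^-$ alone---whose terminal values are the $x_1(1),x_2(1)$ appearing in the statement---and the contribution of the affine forcing, which is $t\mapsto\tfrac16 c_1 t^3+\tfrac12 c_2 t^2$ in the first component and $t\mapsto\tfrac12 c_1 t^2+c_2 t$ in the second. Enforcing the remaining boundary conditions $x_1(1)=s_f$ and $x_2(1)=v_f$ then yields the $2\times2$ linear system $\tfrac16 c_1+\tfrac12 c_2=s_f-x_1(1)$ and $\tfrac12 c_1+c_2=v_f-x_2(1)$, whose coefficient matrix has determinant $-\tfrac1{12}\neq0$---this nonsingularity being exactly the Kalman controllability of the double integrator---and solving it gives \eqref{c1_sol}--\eqref{c2_sol}.

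I do not anticipate a serious obstacle. The one point requiring care is the bookkeeping that keeps the ``reference'' terminal state $(x_1(1),x_2(1))$ produced by the IVP \eqref{IVP} distinct from the terminal state of the projected trajectory, together with the routine verification that the $2\times2$ system is nonsingular so that $c_1,c_2$ are uniquely determined; the convexity observation in the first paragraph is what upgrades the stationarity computation from a necessary condition to a genuine proof that the constructed $u$ is the projection.
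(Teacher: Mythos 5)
Your proposal is correct and follows essentially the same route as the paper: the Hamiltonian/stationarity argument yields $u=u^-+c_1t+c_2$, and imposing the terminal conditions produces the same $2\times2$ linear system with coefficient matrix having entries $1/6,\,1/2,\,1/2,\,1$ and determinant $-1/12$, which the paper derives as a single Newton step of a simple shooting method and you derive by superposition of the $u^-$-driven IVP with the polynomial response to $c_1t+c_2$ --- the same computation in different clothing. Your additional observations (strict convexity guaranteeing that the stationary point is the unique minimizer, and the orthogonality characterization of the projection onto an affine subspace via the integration-by-parts identity) are sound and in fact slightly tighten the paper's argument, which applies the first-order condition without explicitly invoking sufficiency.
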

\begin{proof}
The Hamiltonian function for Problem~(P1) is
\[
H_1(x_1,x_2,u,\lambda_1,\lambda_2,t) := \frac{1}{2}\,(u - u^-)^2 + \lambda_1\,x_2 + \lambda_2\,u\,,
\]
where the adjoint variables $\lambda_1$ and $\lambda_2$ are defined as in ~\eqref{adjoint}, with $H$ replaced by $H_1$, and the subsequent solutions are given as in~\eqref{adjoint_sol}.  The optimality condition for Problem~(P1) is akin to that in~\eqref{optcont1} for Problem~(P) and, owing to the fact that the control $u$ is now unconstrained, can more simply be written as
\[
\frac{\partial H_1}{\partial u}(x, u, \lambda,t) = 0\,,
\]
which yields the optimal control as $u(t) = u^-(t) - \lambda_2(t)$, i.e.
\begin{equation}\label{u1}
u(t) = u^-(t) + c_1\,t + c_2\,,
\end{equation}
for all $t\in[0,1]$. We need to show that $c_1$ and $c_2$ are found as in \eqref{c1_sol}--\eqref{c2_sol}. Using \eqref{u1} in \eqref{A} yields the following time-varying, linear two-point boundary-value problem.
\begin{eqnarray}  \label{TPBVP}
&& \dot{x}_1(t) = x_2(t)\,,\hspace*{23mm} x_1(0) = s_0\,,\ \ x_1(1) = s_f\,, \label{TPBVPa} \\[1mm]
&& \dot{x}_2(t) = u^-(t) + c_1\,t + c_2\,,\ \ \ \,x_2(0) = v_0\,,\ \ x_2(1) = v_f\,, \label{TPBVPb}
\end{eqnarray}
for all $t\in[0,1]$.  In other words, Problem~(P1) is reduced to solving Equations~\eqref{TPBVPa}--\eqref{TPBVPb} for the unknown parameters $c_1$ and $c_2$.  Once $c_1$ and $c_2$ are found, the projected point $u$ in \eqref{u1} is found.  Since Equations~\eqref{TPBVPa}--\eqref{TPBVPb} are linear in $x_1$ and $x_2$, a {\em simple shooting technique}~\cite{AscMatRus1995,StoBul2002} provides the solution for $c_1$ and $c_2$ in just one iteration.  The essence of this technique is that the initial-value problem (IVP)
\begin{eqnarray}  \label{TPBVPc}
&& \dfrac{\partial {z}_1(t,c)}{\partial t} = z_2(t,c)\,,\hspace*{21mm} z_1(0,c) = s_0\,, \label{TPBVPca} \\[1mm]
&& \dfrac{\partial {z}_2(t,c)}{\partial t} = u^-(t) + c_1\,t + c_2\,,\ \ \ \,z_2(0,c) = v_0\,, \label{TPBVPcb}
\end{eqnarray}
for all $t\in[0,1]$, is solved repeatedly, so as to make the {\em discrepancy at\ $t=1$} vanish. Namely, we seek a parameter $c := (c_1, c_2)$ such that $z_1(1,c)-s_f=0$ and $z_2(1,c)-v_f=0$. The procedure is as follows. For a given $c$, there exists a unique solution $z(t,c):=(z_1(t,c),z_2(t,c))$ of \eqref{TPBVPca}--\eqref{TPBVPcb}.  Define the {\em near-miss} (vector) {\em function} $\varphi:\dR^2\to\dR^2$ as follows:
\begin{equation}\label{fi}
\varphi(c) := \left[\begin{array}{c} z_1(1,c) - s_f \\[2mm] z_2(1,c) - v_f \end{array}\right].
\end{equation}
The Jacobian of the near-miss function is
\[
J_\varphi(c) :=
\left[
\begin{array}{cc}
&\\
\dfrac{\partial z_1(1,c)}{\partial c_1} & \dfrac{\partial z_1(1,c)}{ \partial c_2}\\
&\\
\dfrac{\partial z_2(1,c) }{\partial c_1} & \dfrac{\partial z_2(1,c)}{\partial c_2}\\
&\\
\end{array}
\right]
\]  
The shooting method looks for a pair $c$ such that $\varphi(c) := 0$ (i.e., a pair $c$ such that the final boundary conditions are met). Expanding $\varphi$ about, say, $\overline{c}=0$, and discarding the terms of order $2$ or higher, we obtain
\[
\varphi(c) \approx \varphi(0)+ J_\varphi(0)\,c\,.
\]
Substituting $\varphi(c)=0$ in the above expression, replacing ``$\approx$'' with ``$=$'', and re-arranging, gives the single (Newton) iteration of the shooting method:
\begin{equation}  \label{shooting_itn}
c = -[J_{\varphi}(0)]^{-1}\varphi(0)\,.
\end{equation}
The components $(\partial z_i/\partial c_j)(1,c)$, $i,j = 1,2$, of $J_\varphi(c)$, can be obtained by solving the variational equations for~\eqref{TPBVPa}--\eqref{TPBVPb} with respect to $c_1$ and $c_2$, i.e., by solving the following system for $(\partial z_i/\partial c_j)(\cdot,c)$:
\begin{eqnarray*}
&& \frac{\partial}{\partial t}\!\left(\frac{\partial z_1}{\partial c_1}\right)\!(t,c) = \frac{\partial z_2}{\partial c_1}\!(t,c) \,,\quad \frac{\partial z_1}{\partial c_1}(0,c) = 0\,, \\[1mm]
&& \frac{\partial}{\partial t}\!\left(\frac{\partial z_1}{\partial c_2}\right)\!(t,c) = \frac{\partial z_2}{\partial c_2}\!(t,c) \,,\quad \frac{\partial z_1}{\partial c_2}(0,c) = 0\,, \\[1mm]
&& \frac{\partial}{\partial t}\!\left(\frac{\partial z_2}{\partial c_1}\right)\!(t,c) = t\,,\hspace*{14mm} \frac{\partial z_2}{\partial c_1}(0,c) = 0\,, \\[1mm]
&&\frac{\partial}{\partial t}\left(\frac{\partial z_2}{\partial c_2}\right)\!(t,c) = 1\,,\hspace*{14mm} \frac{\partial z_2}{\partial c_2}(0,c) = 0\,.
\end{eqnarray*}
Elementary calculations lead to the following solution of the above system:
\[
\frac{\partial z}{\partial c}(t,c) =  \left[\begin{array}{cc} t^3/6\ &\ t^2/2 \\[2mm]
t^2/2\ &\ t \end{array}\right]\,,
\]
which is independent of $c$. Hence,
\[
J_\varphi(0) = \frac{\partial z}{\partial c}(1,0) =  \left[\begin{array}{cc} 1/6\ &\ 1/2 \\[2mm]
1/2\ &\ 1 \end{array}\right]\,,
\]
with inverse:
\begin{equation}\label{J}
\left[\frac{\partial z}{\partial c}(1,0)\right]^{-1} = \left[J_\varphi(0)\right]^{-1}=
\left[\begin{array}{cc} -12\ &\ 6 \\[2mm]
6\ &\ -2 \end{array}\right]\,.
\end{equation}
Setting $(x_1(\cdot),x_2(\cdot)):=(z_1(\cdot,0),z_2(\cdot,0))$, the IVP \eqref{TPBVPca}--\eqref{TPBVPcb} becomes \eqref{IVPa_ref}--\eqref{IVPb_ref}. Then substitution of \eqref{fi} and \eqref{J} with $c=0$ into Equation~\eqref{shooting_itn}, and expanding out, yield \eqref{c1_sol}--\eqref{c2_sol}. The proof is complete. \end{proof}

\begin{proposition}[Projection onto \boldmath{${\cal B}$}]  \label{proj_B}
The projection $P_{{\cal B}}$ of $u^-\in L^2(0,1;\dR)$ onto the constraint set ${\cal B}$, as the solution of Problem~{\em(P2)},  is given by
\begin{equation}  \label{u_proj_B}
P_{{\cal B}}(u^-)(t) = \left\{\begin{array}{rl}
u^-(t)\,, &\ \ \mbox{if\ \ } -a\le u^-(t)\le a\,, \\[1mm]
-a\,, &\ \ \mbox{if\ \ } u^-(t)\le -a\,, \\[1mm]
a\,, &\ \ \mbox{if\ \ } u^-(t)\ge a\,,
\end{array} \right.
\end{equation}
for all $t\in[0,1]$. \end{proposition}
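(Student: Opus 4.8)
The plan is to bypass the adjoint-variable and shooting machinery used for Proposition~\ref{proj_A}: Problem~(P2) carries no differential constraint, and both its objective and the feasible set~\eqref{B} decouple pointwise in $t$, so a direct argument in $L^2(0,1;\dR)$ is cleaner. First I would record the standard preliminaries: $\mathcal{B}$ is nonempty (the zero function lies in it), convex, and closed in the Hilbert space $L^2(0,1;\dR)$, hence the metric projection $P_{\mathcal{B}}(u^-)$ exists and is unique, and it is characterized by the variational inequality: $p=P_{\mathcal{B}}(u^-)$ if and only if $p\in\mathcal{B}$ and $\int_0^1\bigl(u^-(t)-p(t)\bigr)\bigl(v(t)-p(t)\bigr)\,dt\le 0$ for every $v\in\mathcal{B}$.

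Next I would name the candidate: let $w$ be the function on the right-hand side of~\eqref{u_proj_B}, equivalently $w(t)=\min\{a,\max\{-a,u^-(t)\}\}=\pi\bigl(u^-(t)\bigr)$, where $\pi:\dR\to[-a,a]$ is the ($1$-Lipschitz) metric projection onto the interval $[-a,a]$. Then $w\in\mathcal{B}$: it is measurable, being the composition of the continuous map $\pi$ with the measurable function $u^-$; it satisfies $-a\le w(t)\le a$ for all $t$ by construction; and the same bound gives $\|w\|_2\le a<\infty$, so $w\in L^2(0,1;\dR)$.

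The substantive step is verifying the variational inequality for $w$. I would split $[0,1]$ into the three measurable sets $S_0:=\{t:\,-a\le u^-(t)\le a\}$, $S_+:=\{t:\,u^-(t)>a\}$ and $S_-:=\{t:\,u^-(t)<-a\}$, and examine the integrand $\bigl(u^-(t)-w(t)\bigr)\bigl(v(t)-w(t)\bigr)$ on each, for an arbitrary $v\in\mathcal{B}$: on $S_0$ one has $u^-(t)-w(t)=0$; on $S_+$ one has $u^-(t)-w(t)=u^-(t)-a>0$ while $v(t)-w(t)=v(t)-a\le 0$ because $v(t)\le a$; on $S_-$ one has $u^-(t)-w(t)=u^-(t)+a<0$ while $v(t)-w(t)=v(t)+a\ge 0$ because $v(t)\ge -a$. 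In every case the integrand is $\le 0$ almost everywhere, so its integral over $[0,1]$ is $\le 0$, which is exactly the variational inequality; hence $w=P_{\mathcal{B}}(u^-)$, proving~\eqref{u_proj_B}. Equivalently, one may argue directly from~(P2): its integrand $(u(t)-u^-(t))^2$ is nonnegative and, for each fixed $t$, is minimized over $v\in[-a,a]$ at the unique point $w(t)$ by elementary one-variable calculus; since $w$ is admissible, it is optimal.

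I do not anticipate a genuine obstacle in this proof; the only places calling for a word of justification are the measurability of the truncated function $w$ and the legitimacy of reducing the minimization of the $L^2$ integral to a pointwise problem (or, in the variational-inequality route, simply invoking the Hilbert-space projection characterization rather than writing a Pontryagin-type condition as was natural for Proposition~\ref{proj_A}). Everything else reduces to the elementary observation that clipping to $[-a,a]$ returns the nearest point of that interval.
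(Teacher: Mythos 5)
Your proof is correct, and it takes the only natural route here: the paper's own ``proof'' is a single sentence declaring \eqref{u_proj_B} the straightforward solution of (P2), so your argument---establishing that $\mathcal{B}$ is nonempty, closed and convex, and verifying the Hilbert-space variational inequality (or, equivalently, the pointwise minimization) for the clipped function $w(t)=\min\{a,\max\{-a,u^-(t)\}\}$---simply supplies the standard details the authors leave implicit. Nothing in your write-up diverges from or conflicts with the paper; it is a fully fleshed-out version of the same observation.
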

\begin{proof}
The expression~\eqref{u_proj_B} is the straightforward solution of Problem~(P2).
\end{proof}

\section{Best Approximation Algorithms}

In this section, we discuss best approximation algorithms.  In the following,
\begin{equation}
\text{$X$ is a real Hilbert space}
\end{equation}
with inner product $\langle\cdot,\cdot\rangle$, induced norm $\|\cdot\|$.  We also assume that
\begin{equation}
\text{
$A$ is a closed affine subspace of $X$, and $B$ is a nonempty closed convex subset of $X$.
}
\end{equation}
Given $z\in X$, our aim is to find
\begin{equation}
P_{A\cap B}(z),
\end{equation}
the projection of $z$ onto the intersection $A\cap B$ which we assume to be \emph{nonempty}.
We also assume that we are able to compute the projectors onto the constraints $P_A$ and $P_B$.

Many algorithms are known which could be employed to find $P_{A\cap B}(z)$; here, however, we focus on three simple methods that do not require a product space set-up as some of those considered, in, e.g., \cite{BauCombettes, BauKoch, Combettes2003, Combettes2009}.

In the next section, we will numerically test these algorithms when $X =  L^2(0,1;\dR)$, $A=\mathcal{A}$,  $B=\mathcal{B}$, and $z=0$.

\subsection{Dykstra's Algorithm}
\label{ss:Dykstra}

We start with Dykstra's algorithm (see \cite{BoyleDykstra}), which operates as follows\footnote{In the general case, there is also an auxiliary sequence $(p_n)$ associated with $A$; however, because $A$ is an affine subspace, it is not needed in our setting.}:  Set $a_0 := z$ and $q_0 := 0$.  Given $a_n,q_n$, where $n\geq 0$, update
\begin{equation}
b_{n} := P_B(a_n+q_n),
\;\;
a_{n+1} := P_A(b_{n}),
\;\;
\text{and}
\;\;
q_{n+1} := a_{n}+q_n-b_{n}.
\end{equation}
It is known that both $(a_n)_{n\in\mathbb{B}}$ and $(b_n)_{n\in\mathbb{N}}$ converge \emph{strongly} to $P_{A\cap B}(z)$.

\subsection{Douglas--Rachford Algorithm}
\label{ss:DR}

Given $\beta>0$, we specialize the Douglas--Rachford algorithm (see
\cite{DougRach}, \cite{LM} and \cite{EckBer}) 
to minimize the sum of the two functions $f(x)=\iota_B(x) +
\tfrac{\beta}{2}\|x-z\|^2$ and $g :=
\iota_A$ which have respective proximal
mappings (see \cite[Proposition~24.8(i)]{BauCombettes})
$P_f(x) = P_B\big(\tfrac{1}{1+\beta}x+\tfrac{\beta}{1+\beta}z\big)$ and
$P_g =P_A$.
Set $\lambda := \tfrac{1}{1+\beta}\in\left]0,1\right[$. 
It follows that the Douglas--Rachford operator 
$ T :=  \Id - P_f + P_g(2P_f-\Id)$
turns into 
\begin{align}
Tx &= x-P_B\big(\lambda x+(1-\lambda)z\big)+P_A\Big(2P_B\big(\lambda
x+(1-\lambda)z\big)-x\Big).
\end{align}
Now let $x_0\in X$ and given $x_n\in X$, where $n\geq 0$, update
\begin{equation}
\label{e:180304a}
b_n:= P_B\big(\lambda x_n+(1-\lambda)z\big),\;\;
x_{n+1} := Tx_n 
= x_n-b_n
+P_A\big(2b_n-x_n\big).
\end{equation}
Then it is known (see \cite{Svaiter} or \cite{BauMoursi}) that
$(b_n)_{n\in\mathbb{N}}$ converges weakly to $P_{A\cap B}(z)$.
Note that \eqref{e:180304a} simplifies to 
\begin{equation}
x_{n+1} := x_n - P_B(\lambda x_n)+P_A\big(2P_B(\lambda x_n)-x_n\big)
\quad\text{provided that $z=0$.}
\end{equation}

\subsection{Arag\'on Artacho--Campoy Algorithm}
\label{ss:AAC}

The Arag\'on Artacho--Campoy (AAC) Algorithm was recently presented in \cite{AAC}. Given two fixed parameters $\alpha$ and $\beta$ in $\left]0,1\right[$, define 
\begin{align}
Tx &= (1-\alpha)x +
\alpha\Bigg(2\beta\bigg(P_A\Big(2\beta\big(P_B(x+z)-z\big)-x+z\Big)-z\bigg)+x+2\beta\big(z-P_B(x+z)\big)\Bigg)\notag\\
&=x +
2\alpha\beta\Bigg(P_A\Big(2\beta\big(P_B(x+z)-z\big)-x+z\Big)-P_B(x+z) \Bigg)\,.
\end{align}
Now let $x_0\in X$ and given $x_n\in X$, where $n\geq 0$, update
\begin{equation}
b_{n} := P_B(x_n+z),
\end{equation}
and 
\begin{equation}
\label{e:180302a}
x_{n+1} := Tx_n = x_n +
2\alpha\beta\bigg(P_A\Big(2\beta\big(b_{n}-z\big)-x_n+z\Big)-b_{n}\bigg).
\end{equation}
By \cite[Theorem~4.1(iii)]{AAC},
the sequence $(b_n)_{n\in\mathbb{N}}$ converges strongly to
$P_{A\cap B}(z)$
provided that\footnote{It appears that this constraint qualification is not easy
to check in our setting.} $z-P_{A\cap B}(z)\in (N_A+N_B)(P_{A\cap B}z)$.
Note that \eqref{e:180302a} simplifies to 
\begin{equation}
x_{n+1} := Tx_n = x_n +
2\alpha\beta\Big(P_A\big(2\beta P_Bx_n-x_n\big)-P_Bx_n\Big)
\quad\text{provided that $z=0$.}
\end{equation}


\section{Numerical Implementation}

\subsection{The algorithms}

In this section, we gather the algorithms considered abstractly and explain how we implemented them.

We start with Dykstra's algorithm from Subsection~\ref{ss:Dykstra}.

\noindent
{\bf Algorithm 1 (Dykstra)}
\begin{description}
\item[Step 1] ({\em Initialization}) Choose the initial iterates
$u^0=0$ and $q^0=0$. 
Choose a small parameter $\varepsilon>0$, and set $k=0$. 
\item[Step 2] ({\em Projection onto ${\cal B}$})  Set $u^- = u^{k} + q^k$. 
Compute $\widetilde{u} = P_{{\cal B}}(u^-)$ by
using \eqref{u_proj_B}.
\item[Step 3] ({\em Projection onto ${\cal A}$}) Set $u^- :=
\widetilde{u}$. Compute $\widehat{u} = P_{{\cal A}}(u^-)$ by
using \eqref{u_proj_A}.
\item[Step 4] ({\em Update}) Set $u^{k+1} := \widehat{u}$ and
$q^{k+1} := u^{k} + q^k - \widetilde{u}$\,.
\item[Step 5] ({\em Stopping criterion}) If $\|u^{k+1} -
u^k\|_{L^\infty} \le \varepsilon$, then return $\widetilde{u}$ and stop.  
Otherwise, set $k := k+1$ and go to Step 2.
\end{description}

Next is the Douglas--Rachford method from Subsection~\ref{ss:DR}.

\newpage

\noindent
{\bf Algorithm 2 (DR)}
\begin{description}
\item[Step 1] ({\em Initialization}) Choose a parameter $\lambda\in\left]0,1\right[$ and the initial iterate $u^0$ arbitrarily. 
Choose a small parameter $\varepsilon>0$, and set $k=0$. 
\item[Step 2] ({\em Projection onto ${\cal B}$})  Set $u^- = \lambda u^{k}$. 
Compute $\widetilde{u} = P_{{\cal B}}(u^-)$ by using \eqref{u_proj_B}. 
\item[Step 3] ({\em Projection onto ${\cal A}$}) Set $u^- := 2\widetilde{u}-u^k$. 
Compute $\widehat{u} = P_{{\cal A}}(u^-)$ by using \eqref{u_proj_A}.
\item[Step 4] ({\em Update}) Set $u^{k+1} := u^k + \widehat{u} - \widetilde{u}$.
\item[Step 5] ({\em Stopping criterion}) If $\|u^{k+1} - u^k\|_{L^\infty} \le \varepsilon$, then return $\widetilde{u}$ and stop.  
Otherwise, set $k := k+1$ and go to Step 2.
\end{description}

Finally, we describe the Arag\'on Artacho--Campoy algorithm from Subsection~\ref{ss:AAC}.

\noindent
{\bf Algorithm 3 (AAC)}
\begin{description}
\item[Step 1] ({\em Initialization}) Choose the initial iterate $u^0$ arbitrarily.
Choose a small parameter $\varepsilon>0$, two parameters\footnote{Arag\'on Artacho and Campoy recommend $\alpha=0.9$ and $\beta\in[0.7,0.8]$; see \cite[End of Section~7]{AAC}.} $\alpha$ and $\beta$ in $\left]0,1\right[$, and set $k=0$. 
\item[Step 2] ({\em Projection onto ${\cal B}$})  Set $u^- = u^{k}$. 
Compute $\widetilde{u} = P_{{\cal B}}(u^-)$ by using \eqref{u_proj_B}. 
\item[Step 3] ({\em Projection onto ${\cal A}$})  Set $u^- = 2\beta\widetilde{u}-u^k$.
Compute $\widehat{u} = P_{{\cal A}}(u^-)$ by using \eqref{u_proj_A}. 
\item[Step 4] ({\em Update}) 
Set $u^{k+1} := u^k +2\alpha\beta(\widehat{u}-\widetilde{u})$.
\item[Step 5] ({\em Stopping criterion}) If $\|u^{k+1} - u^k\|_{L^\infty} \le \varepsilon$, then return $\widetilde{u}$ and stop.  
Otherwise, set $k := k+1$ and go to Step 2.
\end{description}

We provide another version of each of Algorithms~1--3, as Algorithms~1b--3b, in Appendix~A.  In Algorithm~1b, we {\em monitor} the sequence of iterates which are the projections onto set ${\cal A}$, instead of monitoring the projections onto set ${\cal B}$ in Algorithm~1.  On the other hand, in Algorithms~2b--3b, the order in which the projections are done is reversed: the first projection is done onto the set ${\cal A}$ and the second projection onto ${\cal B}$.  

Although the order of projections will not matter in view of the existing results stating that convergence is achieved under any order -- see \cite[Proposition~2.5(i)]{BauMou2016}, the order does make a difference in early iterations (as well as in the number of iterations required for convergence of Algorithms~2 and 2b, as we will elaborate on later).  If our intent is to stop the algorithm early so that we can use the current iterate as an initial guess in more accurate computational optimal control algorithms, which can find the junction times with a high precision (see \cite{KayNoa1996, KayNoa2003, KayLucSim2004}), then it is desirable to implement Algorithms~1--3 above, rather than Algorithms~1b--3b, because any iterate of Algorithms~1--3 will satisfy the constraints on the control variable, while that of Algorithms~1b--3b will in general not.

\subsection{Numerical experiments}
In what follows, we study the working of Algorithms~1--3 for an instance of Problem~(P).  Suppose that the car is initially at a reference position 0 and has unit speed. It is desired that the car come back to the reference position and be at rest after one unit of time; namely that $s_0 = 0$, $s_f = 0$, $v_0 = 1$, $v_f = 0$.  For these boundary conditions, no solution exists if one takes the control variable bound $a=2.4$ or smaller but a solution does exist for $a=2.5$.  So, we use $a=2.5$.  In the ensuing discussions, we use the {\em stopping tolerance} $\varepsilon = 10^{-8}$ unless otherwise stated.

\noindent 
{\bf Discretization.}\ \ Algorithms~1--3, as well as 1b--3b, carry out iterations with functions.  For computations, we consider discrete approximations of the functions over the partition $0 = t_0 < t_1 < \ldots < t_N = 1$ such that
\[
t_{i+1} = t_i + h\,,\ \ i = 0,1,\ldots,N\,,
\]
$h := 1/N$ and $N$ is the number of subdivisions.  Let $u_i$ be an approximation of $u(t_i)$, i.e., $u_i\approx u(t_i)$, $i = 0,1,\ldots,N-1$; similarly, $x_{1,i}\approx x_1(t_i)$ and $x_{2,i}\approx x_2(t_i)$, or $x_i := (x_{1,i}, x_{2,i})\approx x(t_i)$, $i = 0,1,\ldots,N$.  In other words, the functions $u$, $x_1$ and $x_2$ are approximated by the $N$-dimensional array $u_h$, with components $u_i$, $i = 0,1,\ldots,N-1$, and the $(N+1)$-dimensional arrays $x_{1,h}$ and $x_{2,h}$, with components $x_{1,i}$ and $x_{2,i}$, $i = 0,1,\ldots,N$, respectively. We define a discretization $P^h_{\cal A}$ of the projection $P_{\cal A}$ as follows.

\begin{equation}  \label{u_proj_A_discr}
P^h_{{\cal A}}(u_h^-)(t) = u_h^- + c_1\,t_h + c_2\,,
\end{equation}
where  $t_h = (0,t_1,\ldots,t_N)$,
\begin{eqnarray}
&& c_1 = 12\,(x_{1,N} - s_f) - 6\,(x_{2,N} - v_f)\, \label{c1_sol_discr}\,, \\[1mm]
&& c_2 = -6\,(x_{1,N} - s_f) + 2\,(x_{2,N} - v_f)\, \label{c2_sol_discr}\,,
\end{eqnarray}
and $x_{1,N}$ and $x_{2,N}$ are obtained from the Euler discretization of \eqref{IVPa_ref}--\eqref{IVPb_ref}:  Given $x_{1,0} = s_0$ and $x_{2,0} = v_0$,
\begin{eqnarray}  \label{Euler}
&& x_{1,i+1} = x_{1,i} + h\,x_{2,i}\,, \label{Euler_a} \\[1mm]
&& x_{2,i+1} = x_{2,i} + h\,u_i^-(t)\,, \label{Euler_b}
\end{eqnarray}
for $i = 0,1,\ldots,N-1$.

The discretization $P^h_{\cal B}$ of the projection $P_{\cal B}$ can be defined in a straightforward manner, by simply replacing $u^-$ in \eqref{u_proj_B} with the discrete components $u_i^-$ of $u_h^-$.

\noindent 
{\bf Parametric Behaviour.}\ \ Obviously, the behaviour of Algorithms~2 and 2b, the Douglas--Rachford method, depend on the parameter $\lambda$, and the behaviour of Algorithms~3 and 3b on the two parameters $\alpha$ and $\beta$.  Figure~\ref{fig:parameters} displays the dependence of the number of iterations it takes to converge on these parameters, for various values of $a$.  The dependence for a given value of $a$ appears to be continuous, albeit the presence of downward {\em spikes}.  

The graphs for Algorithms~2 and 2b, shown in parts (a) and (c) of Figure~\ref{fig:parameters}, respectively, differ significantly from one another.  The bound $a=4$ corresponds to the case when the control constraint becomes active only at $t=0$ -- see Figure~\ref{fig:unconstr_soln}.  In other words, when $a>0$ the optimal control variable is truly unconstrained.  When $a=4$, the best value of $\lambda$ is 1 for Algorithm~2, yielding the solution in just 6 iterations.  For Algorithm~2b, the best value for $\lambda$ is 0.5, as can be seen in (c), producing the solution in 30 iterations.  Going back to Algorithm~2, with decreasing values of $a$, the values of $\lambda$ minimizing the number of iterations shift to the right.  For example, the minimum number of iterations is 91, with $a = 2.5$ and $\lambda = 0.7466$ (found by a refinement of the graph).  

As for Algorithm~2b, the minimizer for $a=2.5$ is $\lambda = 0.5982766$ and the corresponding minimum number of iterations is 38.  This is a point where a downward spike occurs and so the number of iterations is very sensitive to changes in $\lambda$.  For example, the rounded-off value of $\lambda = 0.598277$ results in 88 iterations instead of 38, and $\lambda = 0.55$ yields 444 iterations for convergence.  The number of iterations is less sensitive to the local minimizer $\lambda = 0.7608$, which results in 132 iterations.  It is interesting to note that the graph with $a=4$ appears to be an envelope for the number of iterations for all $\lambda\in]0,1[$.

\begin{figure}[t]
\begin{minipage}{80mm}
\begin{center}
\includegraphics[width=80mm]{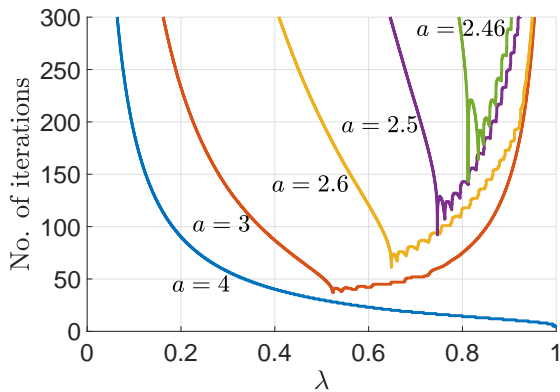} \\[3mm]
(a) Algorithm 2 (DR) \\ \
\end{center}
\end{minipage}
\begin{minipage}{80mm}
\begin{center}
\includegraphics[width=80mm]{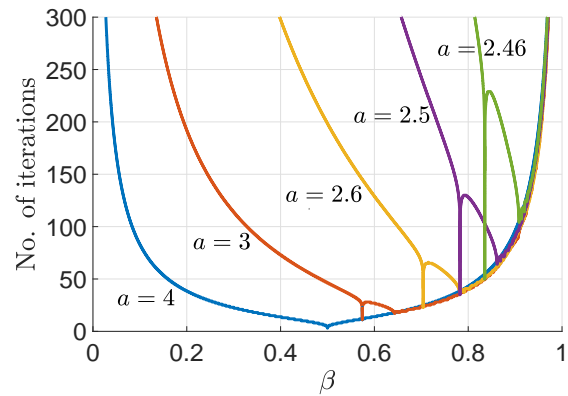} \\[3mm]
(b) Algorithms 3 (AAC) and 3b (AAC-b) \\
for $\alpha = 1$
\end{center}
\end{minipage}
\\[5mm]
\begin{minipage}{80mm}
\begin{center}
\includegraphics[width=80mm]{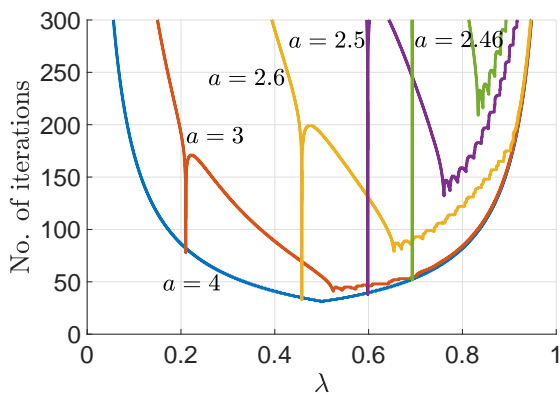} \\[3mm]
(c) Algorithm 2b (DR-b)
\end{center}
\end{minipage}
\begin{minipage}{80mm}
\begin{center}
\includegraphics[width=80mm]{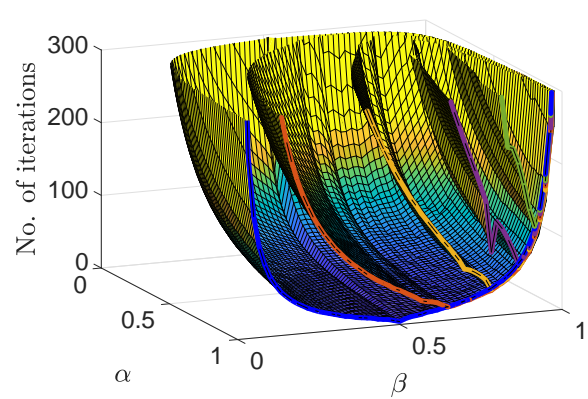} \\[3mm]
(d) Algorithms 3 (AAC) and 3b (AAC-b)
\end{center}
\end{minipage}
\
\caption{\sf Numerical experiments with $s_0 = 0$, $s_f = 0$, $v_0 = 1$, $v_f = 0$.} 
\label{fig:parameters}
\end{figure}

The graphs for Algorithms~3 and 3b, the Arag\'on Artacho--Campoy algorithm, are indistinguishable to one's eye; therefore we only display the one in Figure~\ref{fig:parameters}(b).  Part~(d) of Figure~\ref{fig:parameters} shows surface plots of the number of iterations versus the algorithmic parameters $\alpha$ and $\beta$, for the same values of $a$ as in the rest of the graphs in the figure.  It is interesting to observe that the surfaces look to be cascaded with (roughly) the outermost surface being the one corresponding to $a=4$.  The surface plot suggests that for minimum number of iterations, one must have $\alpha = 1$.  Although theory requires $\alpha < 1$, $\alpha=1$ seems to cause no concerns in this particular intance; so, we set $\alpha = 1$ for the rest of the paper.  The cross-sectional curves at $\alpha = 1$ are shown with much more precision in part~(b) of the figure.  The {\em spikes} that are observed in part~(d) can also be seen in the graph in part~(b).

In fact, the first observation one has to make here is that, for $a=4$, convergence can be achieved in merely one iteration, with $\beta = 0.5$.  This is quite remarkable, compared with Algorithms~2 and 2b.  The graphs in~(b) appear to be enveloped as well by the graph for $a=4$, as in part (c).  For the values of $a$ other than 4, the globally minimum number of iterations seems to be achieved at a downward spike, which as a result is very sensitive to changes in $\beta$.  For example, for $a=2.5$, the optimal $\beta$ value is 0.78249754 for a minimum 35 iterations.  A rounded-off $\beta = 0.782$ results in 111 iterations, and $\beta = 0.7$ yields 243 iterations.  Sensitivity at the local minimizer $\beta = 0.8617$ giving 64 iterations is far less: Choosing $\beta = 0.8$ or $0.9$ results in 128 or 90 iterations, respectively.  It is interesting to note that, as in the case of Algorithms 2 and 2b, the graphs in Figure~\ref{fig:parameters}(b) are approximately enveloped by the graph/curve drawn for $a=4$.


\begin{figure}[t]
\begin{minipage}{80mm}
\begin{center}
\includegraphics[width=80mm]{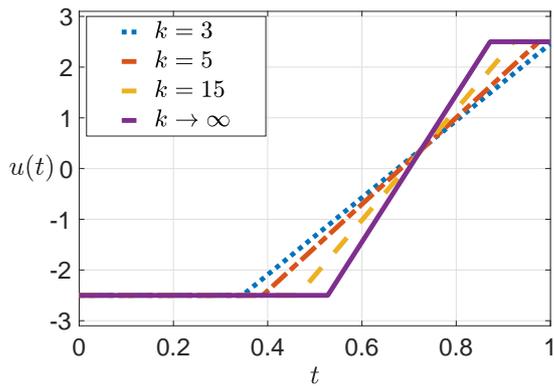} \\[3mm]
(a) Algorithm 1 (Dykstra) \\
stops after 530 iterations.
\end{center}
\end{minipage}
\begin{minipage}{80mm}
\begin{center}
\includegraphics[width=80mm]{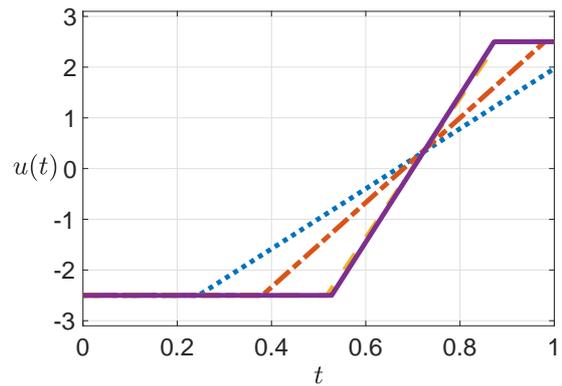} \\[3mm]
(b) Algorithm 2 (DR, $\lambda = 0.7466$) \\
stops after 91 iterations.
\end{center}
\end{minipage}
\\[5mm]
\begin{minipage}{80mm}
\begin{center}
\includegraphics[width=80mm]{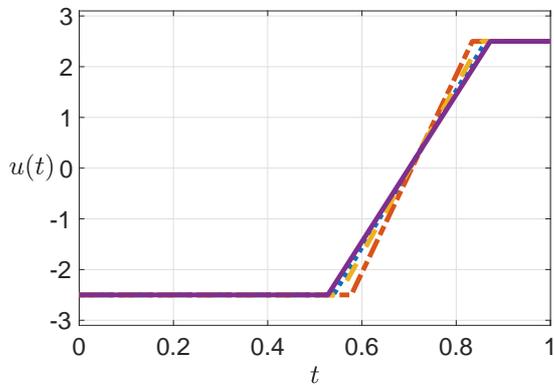} \\[3mm]
(c) Algorithm 3 (AAC, $\alpha = 1$, $\beta = 0.8617$) \\
stops after 64 iterations.
\end{center}
\end{minipage}
\begin{minipage}{80mm}
\end{minipage}
\\[5mm]
\begin{minipage}{80mm}
\begin{center}
\includegraphics[width=80mm]{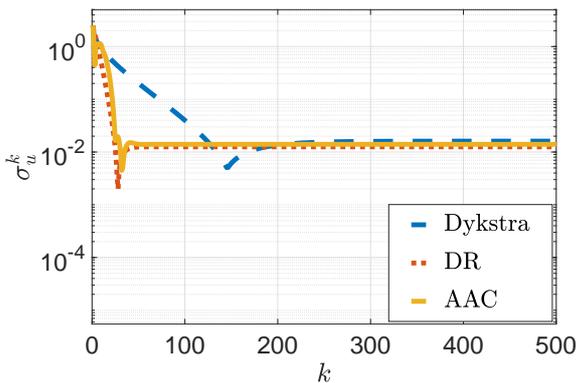} \\[3mm]
(d) $L^\infty$-error in control by Algorithms~1--3.
\end{center}
\end{minipage}
\begin{minipage}{80mm}
\begin{center}
\includegraphics[width=80mm]{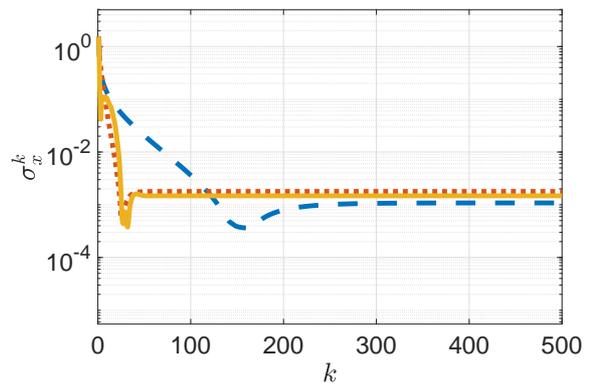} \\[3mm]
(e) $L^\infty$-error in states by Algorithms~1--3.
\end{center}
\end{minipage}
\ 
\caption{\sf Numerical experiments with $a=2.5$, $s_0 = 0$, $s_f = 0$, $v_0 = 1$, $v_f = 0$, and the number of discretization subintervals $N = 2000$.  The graphs in (a)--(c) show approximations of the optimal control function with Algorithms~1--3, after $k = 3, 5, 15$ iterations, with $\varepsilon = 10^{-8}$.  All algorithms are observed to converge to the optimal solution indicated by $k\to\infty$, in various rates.  The semi-log graphs in~(d) and (e) show the $L^\infty$ errors in the state and control variables, respectively, in each iteration of the three algorithms.} 
\label{fig:projections}
\end{figure}


\noindent 
{\bf Behaviour in Early Iterations.}\ \ Figure~\ref{fig:projections}(a)--(c) illustrates the working of all three algorithms for the same instance.  All three algorithms converge to the optimal solution, with the stopping tolerance of $\varepsilon = 10^{-8}$.  The optimal values of the algorithmic parameters, $\lambda = 0.7466$ for Algorithm~2, and $\alpha = 1$ and $\beta = 0.8617$ for Algorithm~3, have been used.  The third, fifth and fifteenth iterates, as well as the solution curve, are displayed for comparisons of behaviour.  At least for the given instance of the problem, it is fair to say from Figure~\ref{fig:projections}(c) that Algorithm~3 gets closer to the solution much more quickly than the others in the few initial iterations---see the third and fifth iterates.  It also achieves convergence in a smaller number of iterations (64 as opposed to 530 and 91 iterations of the Algorithms~1 and 2, respectively).

\noindent 
{\bf Error Analysis via Numerical Experiments.}\ \ For a fixed value of $N$, Algorithms~1--3 converge only to some approximate solution of the original Problem.  Therefore, the question as to how the algorithms behave as the time partition is refined, i.e., $N$ is increased, needs to be investigated.  For the purpose of a numerical investigation, we define, in the $k$th iteration, the following errors.  Suppose that the pair $(u^*,x^*)$ is the optimal solution of Problem~(P) and $(u_h^k,x_h^k)$ an approximate solution of Problem~(P) in the $k$th iteration of a given algorithm.  Define
\[
\sigma_u^k := \max_{0\le i\le N-1} |u^k_i - u^*(t_i)|\qquad\mbox{and}\qquad \sigma_x^k := \max_{0\le i\le N} ||x^k_i - x^*(t_i)||_\infty\,,
\]
where $||\cdot||_\infty$ is the $\ell_\infty$-norm in $\dR^2$.  For large $N$, these expressions are reminiscent of the $L^\infty$-norm, and therefore they will be referred to as the {\em $L^\infty$-error}.

For $(u^*,x^*)$ in the error expressions, we have used the discretized (approximate) solution obtained for the Euler-discretized Problem~(P) utilizing the Ipopt--AMPL suite, with $N=10^6$ and the tolerance set at $10^{-14}$.

For $N=2000$, these errors are depicted in Figure~\ref{fig:projections}(d) and (e).  From the graphs it is immediately clear that no matter how much smaller the stopping tolerance is selected, the best error that is achievable with $N=2000$ is around $10^{-2}$ for the control variable and around $10^{-3}$ for the state variable vector.  In fact, the graphs also tell that perhaps a much smaller stopping threshold than $10^{-8}$ would have achieved the same approximation to the continuous-time solution of Problem~(P).  By just looking at the graphs, one can see that Algorithm~1 could have been run just for about 300 iterations instead of 530, and Algorithms~2 and 3 could have been run for about 50 iterations to achieve the best possible approximation with $N=2000$.

In Figure~\ref{fig:errors}, we depict the same errors for $N=10^3$ (parts (a) and (b)), $N=10^4$ (in parts~(c) and (d)) and $N=10^5$ (in parts~(e) and (f)).  It is observed that, with a ten-fold increase in $N$ (which is a ten-fold decrease in $h$) the errors in both $u$ and $x$ are reduced by ten-folds, implying that the error (both in $x$ and in $u$) depends on the stepsize $h$ linearly.  This is in line with the theory of Euler-discretization of optimal control problems; see, for example, \cite{DonHag2001, DonHagMal2000}.  Furthermore, even for very large values of $N$, it can be seen from these graphs that a stopping threshold slightly smaller than $10^{-8}$ would suffice to get even more stringent error levels, such as around $10^{-4}$ for the control variable and around $10^{-5}$ for the state variable vector.  A larger stopping threshold would obviously result in smaller number of iterations.

\begin{figure}[t]
\begin{minipage}{80mm}
\begin{center}
\includegraphics[width=80mm]{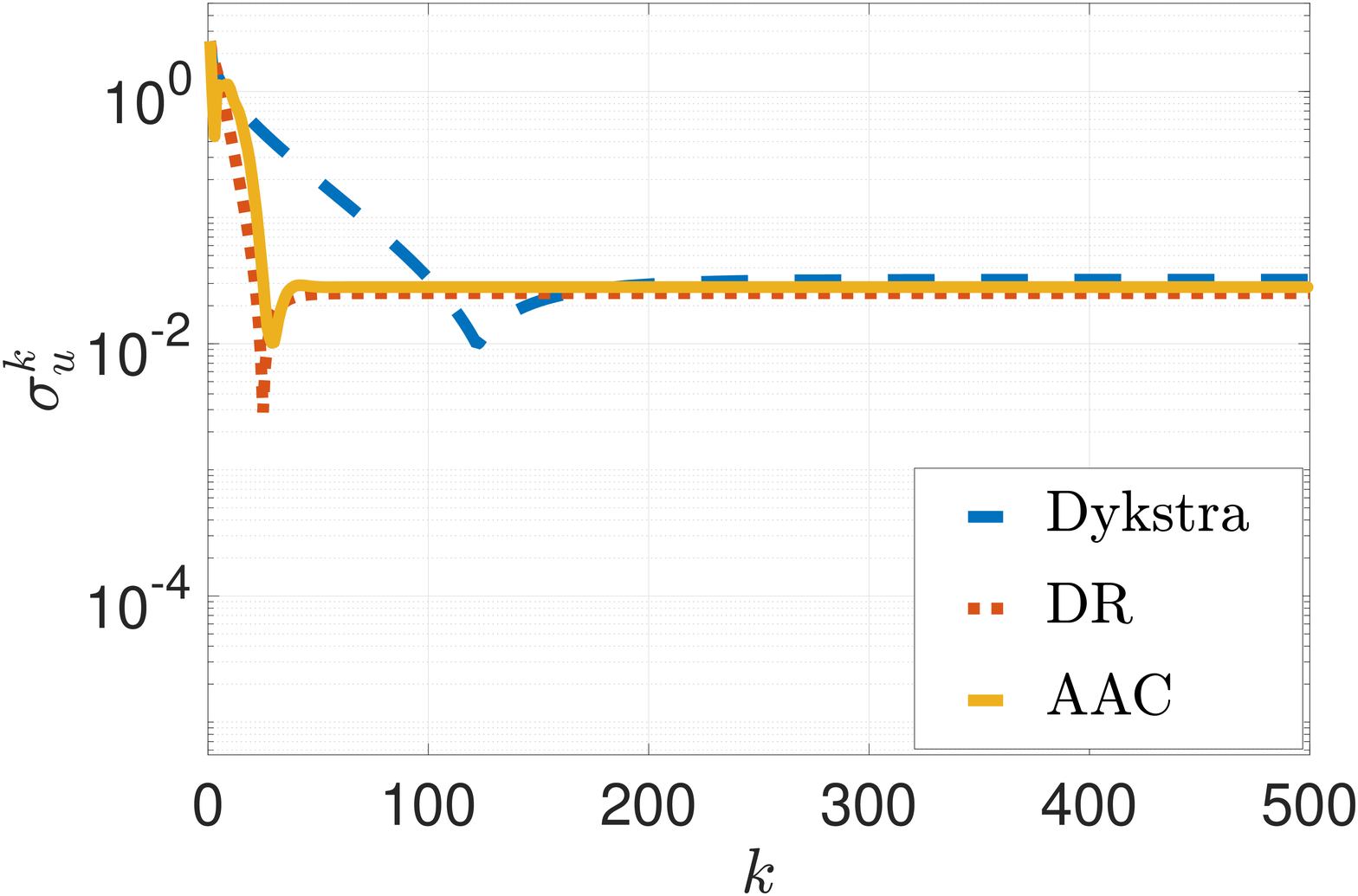} \\[3mm]
(a) $L^\infty$-error in control with $N=10^3$.
\end{center}
\end{minipage}
\begin{minipage}{80mm}
\begin{center}
\includegraphics[width=80mm]{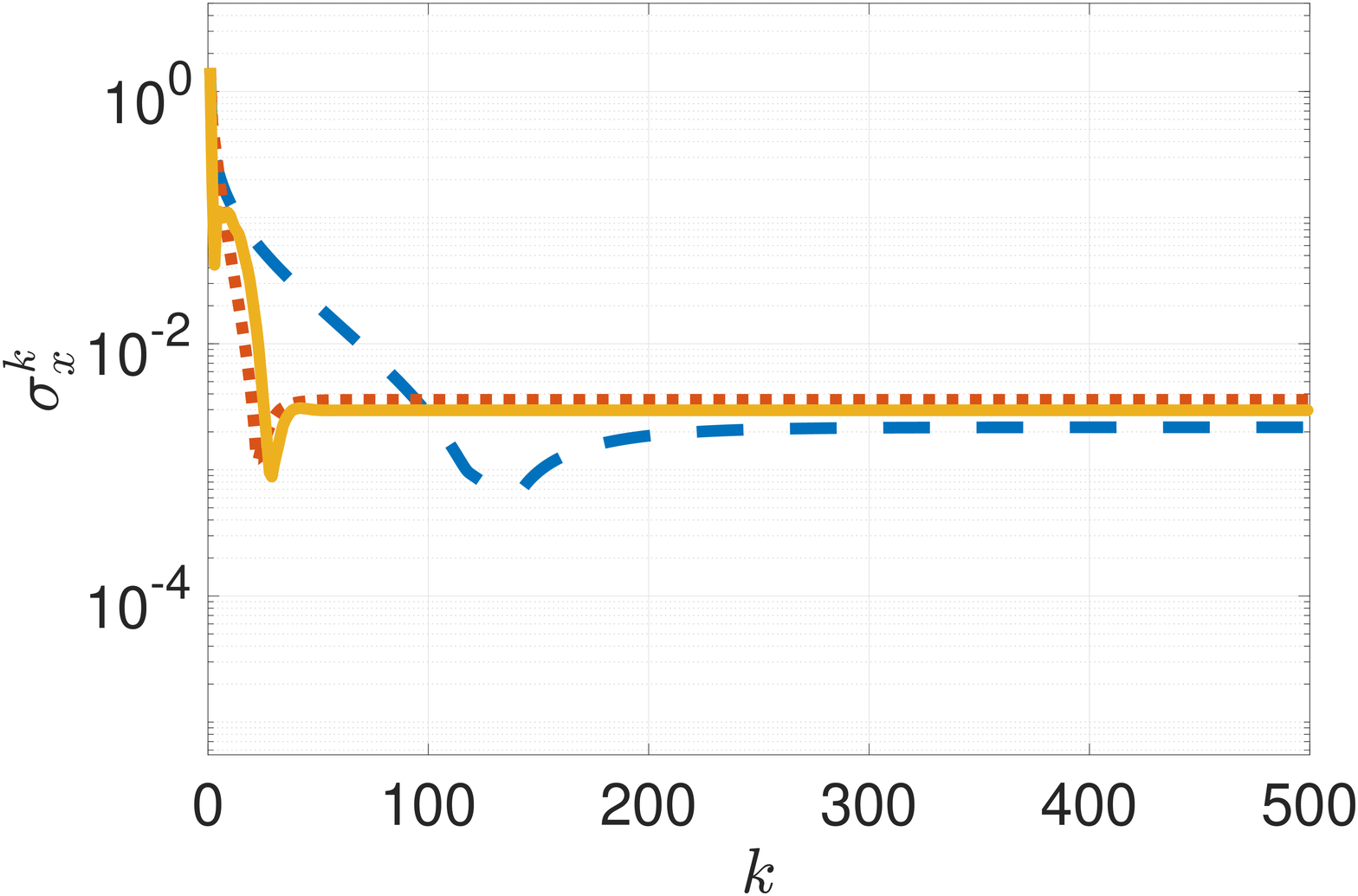} \\[3mm]
(b) $L^\infty$-error in states with $N=10^3$.
\end{center}
\end{minipage}
\\[5mm]
\begin{minipage}{80mm}
\begin{center}
\includegraphics[width=80mm]{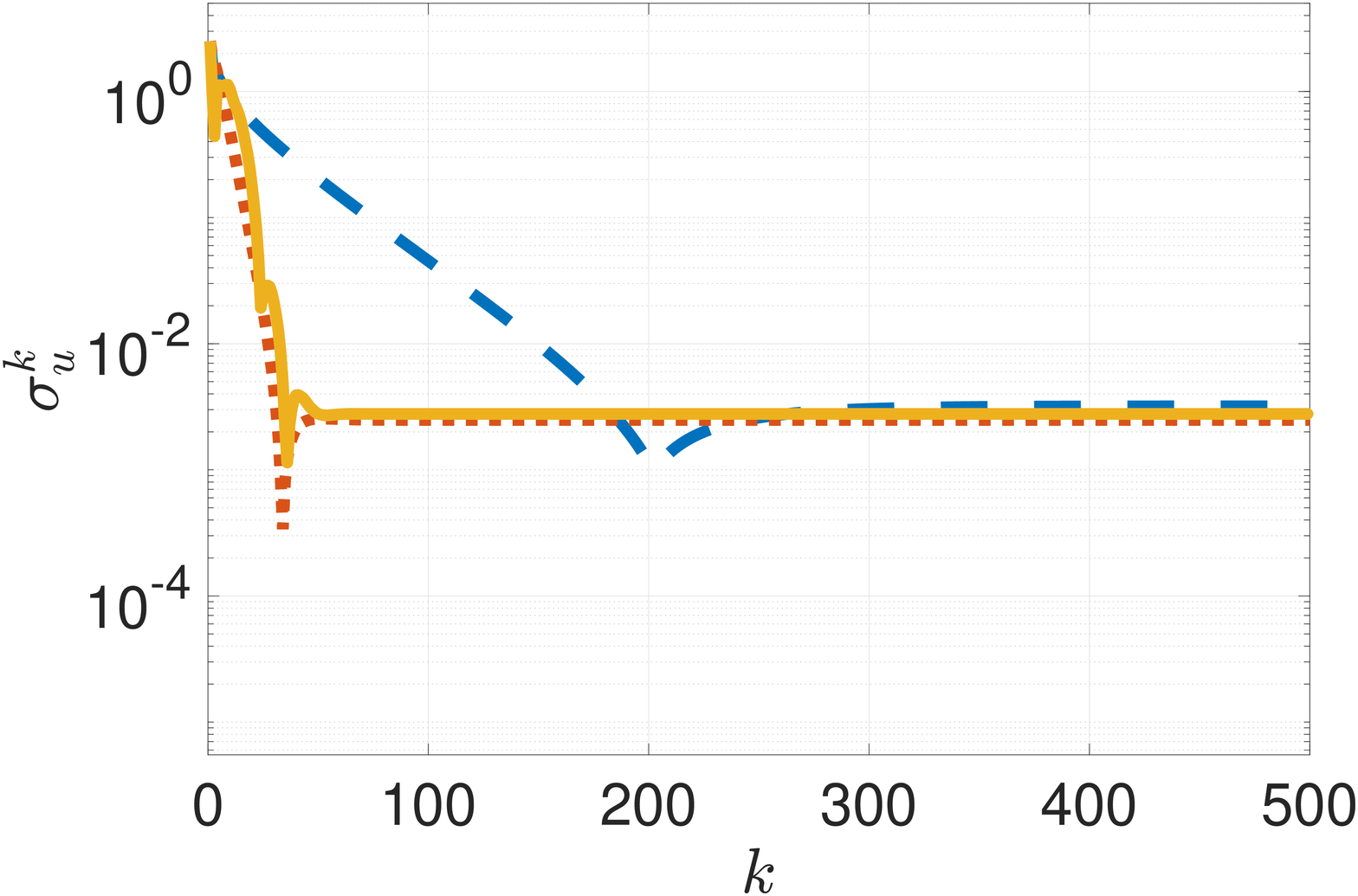} \\[3mm]
(c) $L^\infty$-error in control with $N=10^4$.
\end{center}
\end{minipage}
\begin{minipage}{80mm}
\begin{center}
\includegraphics[width=80mm]{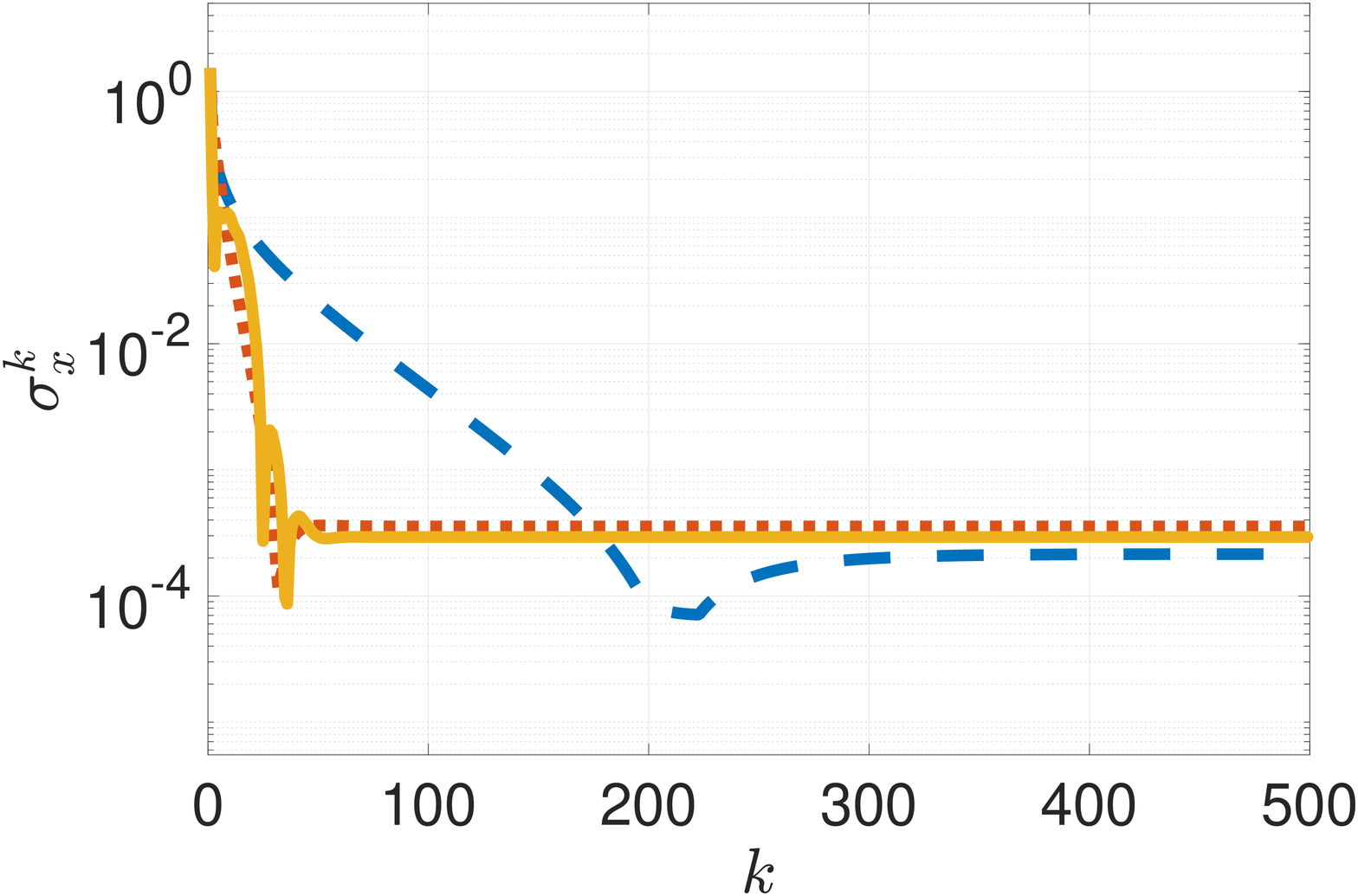} \\[3mm]
(d) $L^\infty$-error in states with $N=10^4$.
\end{center}
\end{minipage}
\\[5mm]
\begin{minipage}{80mm}
\begin{center}
\includegraphics[width=80mm]{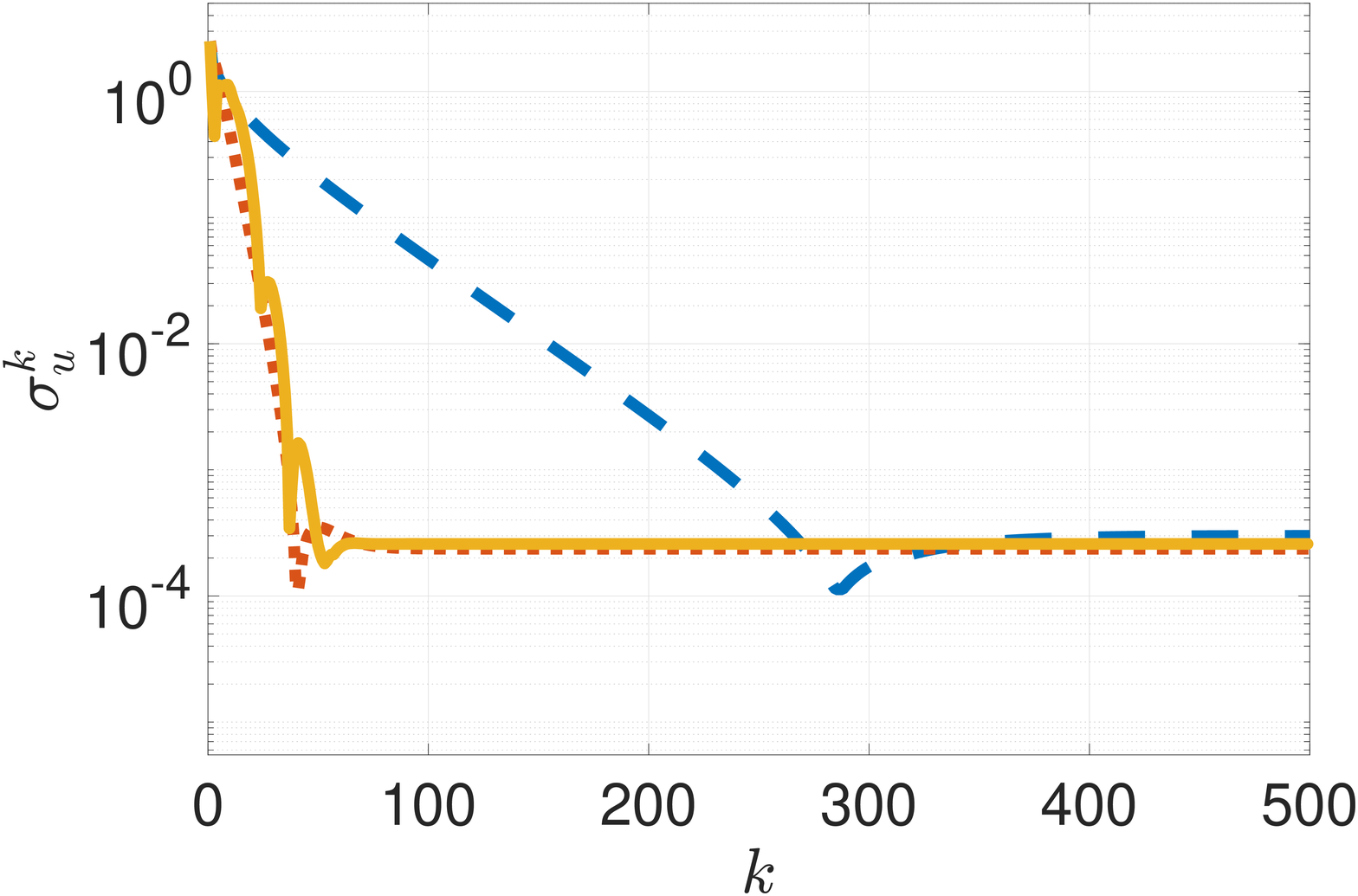} \\[3mm]
(e) $L^\infty$-error in control with $N=10^5$.
\end{center}
\end{minipage}
\begin{minipage}{80mm}
\begin{center}
\includegraphics[width=80mm]{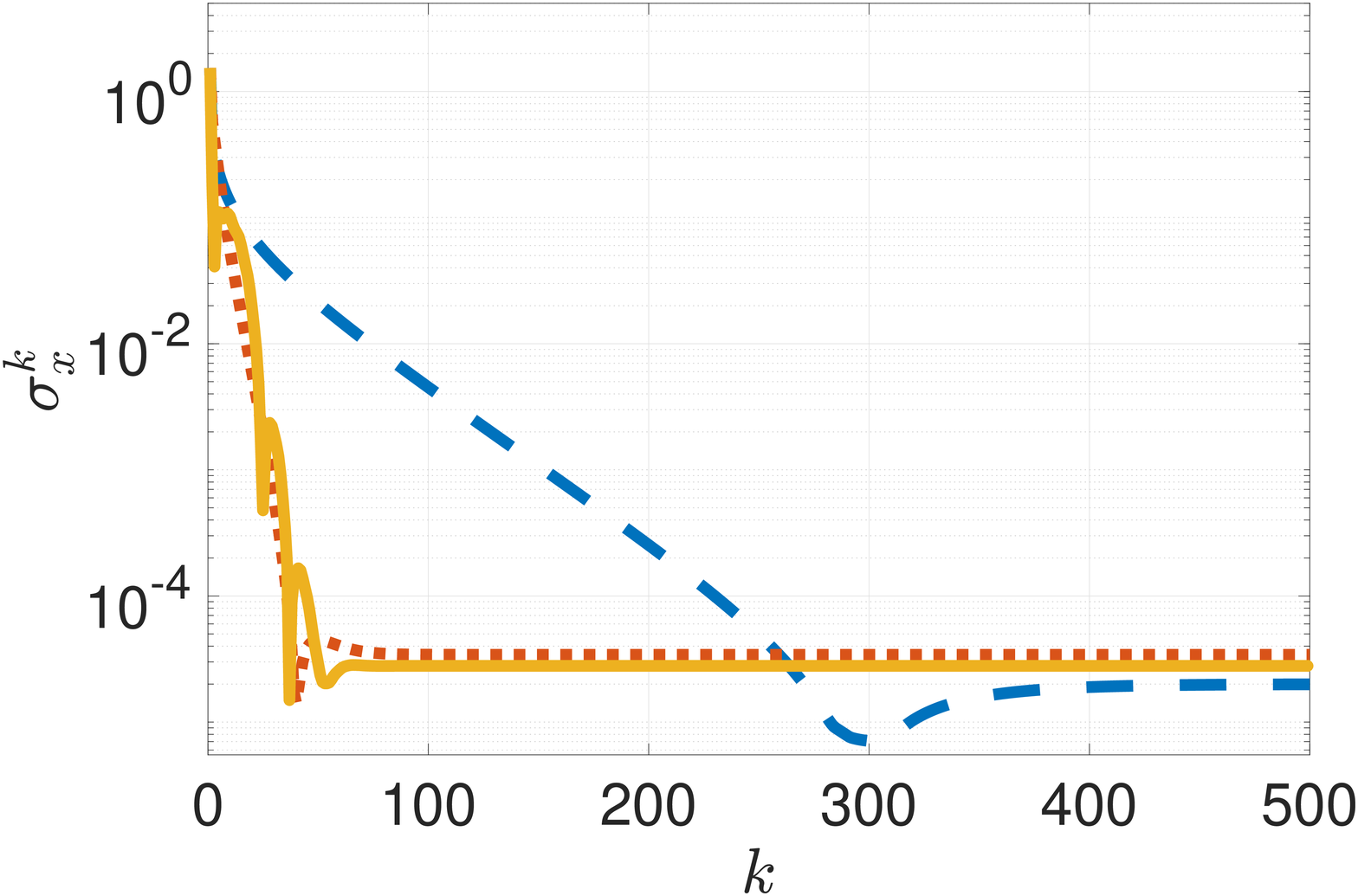} \\[3mm]
(f) $L^\infty$-error in states with $N=10^5$.
\end{center}
\end{minipage}
\ 
\caption{\sf Numerical experiments with $a=2.5$, $s_0 = 0$, $s_f = 0$, $v_0 = 1$, $v_f = 0$.  The semi-log graphs show the $L^\infty$ errors in the state and control variables, respectively, in each iteration of the three algorithms, with various $N$ from coarse ($N=1000$) to fine ($N=100000$).} 
\label{fig:errors}
\end{figure}

\clearpage

Table~\ref{table:errors} displays the values of the errors, separately in $u$ and $x$, after the stopping criteria with $\varepsilon = 10^{-8}$ was satisfied, for each of the three algorithms. A precise 10-fold reduction in error with a 10-fold increase in $N$ can be verified with these numbers, as discussed in the previous paragraph.  We have added the experiments we have carried out with Ipopt, version 3.12, an interior point optimization software \cite{WacBie2006}, which solved the direct Euler-discretization of Problem~(P), with the same values of $N$ and the same tolerance $10^{-8}$.  Ipopt, running with linear solver MA57, was paired up with the optimization modelling language AMPL~\cite{AMPL}.  The same 10-fold decreases in error cannot be observed with Ipopt, unless one sets the tolerance for Ipopt to be much smaller than $10^{-8}$, say $10^{-14}$ (which also means longer computational times). With the tolerance set at $10^{-14}$, the error values with Ipopt becomes pretty much the same as those with Dykstra (still with $\varepsilon = 10^{-8}$), which is interesting to note.

As we pointed out earlier, the same errors listed in Table~\ref{table:errors} can be achieved with bigger stopping thresholds.  For $N=10^3, 10^4, 10^5$, respectively: with $\varepsilon = 10^{-6}, 10^{-6}, 10^{-7}$, Algorithm~1 converges in 281, 359 and 454 iterations; with $\varepsilon = 10^{-5}, 10^{-5}, 10^{-7}$, Algorithm~2 converges in 65, 50 and 101 iterations; with $\varepsilon = 10^{-4}, 10^{-5}, 10^{-6}$, Algorithm~3 converges in 49, 60 and 70 iterations.  

In Table~\ref{table:CPU_times}, the CPU times (in seconds) each algorithm takes, with the respective $\varepsilon$ values listed above, are tabulated.  Note that Algorithms~1--3 have been coded and run on Matlab, 64-bit (maci64) version R2017b.  All software, including AMPL and Ipopt, were run on MacBook Pro, with operating system macOS Sierra version 10.12.6, processor 3.3 GHz Intel Core i7 and memory 6 GB 2133 MHz LPDDR3.  In Table~\ref{table:CPU_times}, the CPU times for Ipopt are listed with the tolerance $10^{-14}$, since with only this fine tolerance it is possible to obtain the same order of the error magnitudes as those obtained by Algorithms~1--3.  With $\varepsilon = 10^{-8}$, the CPU times for Ipopt are 0.06, 0.45 and 4.4 seconds, respectively, which are significantly higher than the times taken by Algorithms~1--3, in addition to worse errors.

Numerical observations suggest two joint winners: Algorithms~2 and 3, i.e., the Douglas--Rachford method and the Arag\'on Artacho--Campoy algorithm, in both accuracy and speed.

\begin{table}[t]
\begin{center}
{\footnotesize
\begin{tabular}{ScScScScSc}
$N$ & Dykstra & DR & AAC & Ipopt \\
\hline\hline
$10^3$ &  $3.2\times10^{-2}$ & $2.5\times10^{-2}$ & $2.8\times10^{-2}$ & $3.2\times10^{-2}$ \\ \hline
$10^4$ & $3.2\times10^{-3}$ & $2.5\times10^{-3}$ & $2.8\times10^{-3}$ & $7.7\times10^{-3}$ \\ \hline
$10^5$ & $3.0\times10^{-4}$ & $2.4\times10^{-4}$ & $2.6\times10^{-4}$ & $1.6\times10^{-2}$ \\ \hline
\end{tabular}
\\[5mm]
(a) $L^\infty$-error in control, $\sigma_u^k$\,. \\[5mm]
\begin{tabular}{ScScScScSc}
$N$ & Dykstra & DR & AAC & Ipopt \\
\hline\hline
$10^3$ &  $2.2\times10^{-3}$ & $3.6\times10^{-3}$ & $3.0\times10^{-3}$ & $2.2\times10^{-3}$  \\ \hline
$10^4$ & $2.1\times10^{-4}$ & $3.6\times10^{-4}$ & $2.9\times10^{-4}$ & $2.3\times10^{-4}$   \\ \hline
$10^5$ & $2.0\times10^{-5}$ & $3.4\times10^{-5}$ & $2.8\times10^{-5}$ &  $8.7\times10^{-5}$  \\ \hline
\end{tabular}
\\[5mm]
(b) $L^\infty$-error in states, $\sigma_x^k$\,.
}
\end{center}
\caption{Least errors that can be achieved by Algorithms~1--3 and Ipopt, with $\varepsilon = 10^{-8}$.}
\label{table:errors}
\end{table}

\begin{table}[h]
\begin{center}
{\footnotesize
\begin{tabular}{ScScScScSc}
$N$ & Dykstra & DR & AAC & Ipopt \\
\hline\hline
$10^3$ &  0.03 & 0.01 & 0.01 & 0.08 \\ \hline
$10^4$ & 0.16 & 0.05 & 0.05 & 0.71 \\ \hline
$10^5$ & 1.6\ \,& 0.41 & 0.28 & 7.3\ \,\\ \hline
\end{tabular}
}
\end{center}
\caption{CPU times taken by Algorithms~1--3 and Ipopt.  For $N=10^3, 10^4, 10^5$, respectively:  $\varepsilon = 10^{-6}, 10^{-6}, 10^{-7}$ for Algorithm~1, $\varepsilon = 10^{-5}, 10^{-5}, 10^{-7}$ for Algorithm~2, and $\varepsilon = 10^{-4}, 10^{-5}, 10^{-6}$ for Algorithm~3, have been used.  The tolerance for Ipopt was set as $10^{-14}$.}
\label{table:CPU_times}
\end{table}

\newpage

\section{Conclusion and Open Problems}

We have applied three well-known projection methods to solve an optimal control problem, i.e., control-constrained minimum-energy control of double integrator.  We have derived the projectors for the optimal control problem and demonstrated that they can be used in Dykstra's algorithm, the Douglas--Rachford (DR) method and the Arag\'on Artacho--Campoy (AAC) algorithm, effectively.  We carried out extensive numerical experiments for an instance of the problem and concluded that the DR and AAC algorithms (Algorithms~2 and 3) were jointly the most successful.  We also made comparisons with the standard discretization approach, only to witness the benefit of using projection methods.

It is interesting to note that when we apply alternating projections, we also seem to converge to
$P_{\mathcal{A}\cap\mathcal{B}}(0)$ even though this is not supported by existing theory.

To the best of authors' knowledge, the current paper constitutes the first of its kind which involves  projection methods and continuous-time optimal control problems. It can be considered as a prototype for future studies in this direction.  Some of the possible directions are listed as follows.
\begin{itemize}
\item The setting we have introduced could be extended to general control-constrained linear-quadratic problems.
\item  We have used some discretization of the projector as well as the associated IVP in~\eqref{u_proj_A_discr}--\eqref{Euler_b}.  This might be extended to problems in more general form.  On the other hand, for the particular problem we have dealt with in the present paper, one might take into account the fact that if $u^-(t)$ is piecewise linear then its projection is piecewise linear.  This might simplify further the expressions given in Proposition~\ref{proj_A}.
\item  Although theory for projection methods can in principle vouch convergence only for convex problems, it is well-known that the DR method can be successful for nonconvex problems, see, for example, \cite{BorSim2011}.  It would be interesting to extend the formulations in the current paper to nonconvex optimal control problems.
\item For a certain value of an algorithmic parameter, Figure~3 exhibits downward spikes.  It would be interesting to see if this phenomenon is also observed in other control-constrained optimal control problems.
\end{itemize}

\newpage

\section*{Appendix}

\noindent
{\bf Algorithm 1b (Dykstra-b)}
\begin{description}
\item[Steps 1--4] ({\em Initialization}) Do as in Steps~1--4 of Algorithm~1.
\item[Step 5] ({\em Stopping criterion}) If $\|u^{k+1} - u^k\|_{L^\infty} \le \varepsilon$, then return $u^{k+1}$ and stop.  
Otherwise, set $k := k+1$ and go to Step 2.
\end{description}

\noindent
{\bf Algorithm 2b (DR-b)}
\begin{description}
\item[Step 1] ({\em Initialization}) Choose a parameter $\lambda\in\left]0,1\right[$ and the initial iterate $u^0$ arbitrarily. 
Choose a small parameter $\varepsilon>0$, and set $k=0$. 
\item[Step 2] ({\em Projection onto ${\cal A}$})  Set $u^- = \lambda u^{k}$. 
Compute $\widetilde{u} = P_{{\cal A}}(u^-)$ by using \eqref{u_proj_A}. 
\item[Step 3] ({\em Projection onto ${\cal B}$}) Set $u^- := 2\widetilde{u}-u^k$. 
Compute $\widehat{u} = P_{{\cal B}}(u^-)$ by using \eqref{u_proj_B}.
\item[Step 4] ({\em Update}) Set $u^{k+1} := u^k + \widehat{u} - \widetilde{u}$.
\item[Step 5] ({\em Stopping criterion}) If $\|u^{k+1} - u^k\|_{L^\infty} \le \varepsilon$, then return $\widetilde{u}$ and stop.  
Otherwise, set $k := k+1$ and go to Step 2.
\end{description}

\noindent
{\bf Algorithm 3b (AAC-b)}
\begin{description}
\item[Step 1] ({\em Initialization}) Choose the initial iterate $u^0$ arbitrarily.
Choose a small parameter $\varepsilon>0$, two parameters $\alpha$ and $\beta$ in $\left]0,1\right[$, and set $k=0$. 
\item[Step 2] ({\em Projection onto ${\cal A}$})  Set $u^- = u^{k}$. 
Compute $\widetilde{u} = P_{{\cal A}}(u^-)$ by using \eqref{u_proj_A}. 
\item[Step 3] ({\em Projection onto ${\cal B}$})  Set $u^- = 2\beta\widetilde{u}-u^k$.
Compute $\widehat{u} = P_{{\cal B}}(u^-)$ by using \eqref{u_proj_B}. 
\item[Step 4] ({\em Update}) 
Set $u^{k+1} := u^k +2\alpha\beta(\widehat{u}-\widetilde{u})$.
\item[Step 5] ({\em Stopping criterion}) If $\|u^{k+1} - u^k\|_{L^\infty} \le \varepsilon$, then return $\widetilde{u}$ and stop.  
Otherwise, set $k := k+1$ and go to Step 2.
\end{description}

\end{document}